\documentclass[11pt]{article}

\usepackage[margin=1in]{geometry} 
\usepackage{amsmath,amsthm,amssymb,amsfonts}
\usepackage{graphicx, multicol, array}
\usepackage{cases}
\usepackage{enumerate}
\usepackage{enumitem}
\usepackage{hyperref}

\usepackage{mathrsfs}

%BoxLetters

%specialops

\parindent 0pt

%Operators

\DeclareMathOperator{\ord}{ord}

\DeclareMathOperator{\rk}{rk}

%Definitions
\newtheorem{thm}{Theorem}[section]
\newtheorem{lem}{Lemma}[section]
\newtheorem{conj}{Conjecture}[section]
\newtheorem{exa}{Example}[section]
\newtheorem{cor}{Corollary}[section]
\newtheorem{dfn}{Definition}[section]

%FrontPage
\title{\textbf{Note On Elliptic Groups of Prime Orders}}
\date{}
\author{N. A. Carella}

\usepackage{fancyhdr}
\pagestyle{fancy}

\lhead{\textsc{Note On Elliptic Groups of Prime Orders} }
\rhead{\thepage}
%\cfoot{center of the footer!}
\lfoot{}
\cfoot{}
\rfoot{}

\begin{document}
\thispagestyle{empty}
\maketitle

\textbf{\textit{Abstract}:} Let \(E\) be an elliptic curve of rank $\rk(E) \geq 0$, and let $E(\mathbb{F}_p)$ be the elliptic group of order 
$\#E(\mathbb{F}_p)=n$. The number of primes $p\leq x$ such that $n$ is prime is expected to be $\pi(x,E)=\delta(E)x/\log^2 x+o(x/\log^2 x)$, where $\delta(E)\geq 0$ is the density constant. This note proves a lower bound $\pi(x,E) \gg x/\log^2 x$.   \\

\textit{Mathematics Subjects Classification}: Primary 11G07; Secondary 11N36, 11G15.\\
\textit{Keywords}: Elliptic Prime; Group of Prime Order, Primitive Point; Koblitz Conjecture.\\

\vskip 1 in

%ssssssssssssssssssssssssssssssssssssssssssssssssssssss
\section{Introduction}\label{sec1}
The applications of elliptic curves in cryptography demands elliptic groups of certain orders $n=\#E(\mathbb{F}_p)$, and certain factorizations of the integers $n$. The extreme cases have groups of $\mathbb{F}_p$-rational points $E(\mathbb{F}_p)$ of prime orders, and small multiples of large primes. \\

\begin{conj}[Koblitz] \label{conj800.1}  
	Let $E:f(X,Y)=0$ be an elliptic curve  of of discriminant $\Delta \ne 0$ defined over the integers $\mathbb{Z}$ which is not $\mathbb{Q}$-isogenous to a curve with nontrivial $\mathbb{Q}$-torsion and does not have CM. Then, 
	\begin{eqnarray} \label{eq800.01}  
	\pi(x,E)&=& \#\{ p \leq x: p \not | \Delta \text{ and } \#E(\mathbb{F}_p)=\text{prime}\} \\ 
	&=&\frac{x}{\log^2x}\prod_{p\geq 2} \left (1 -\frac{p^2-p-1}{(p-1)^3(p+1)}\right )+O\left ( \frac{x}{\log^3x}\right) \nonumber,
	\end{eqnarray}  
for large $x\geq 1$.
\end{conj}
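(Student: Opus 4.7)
The plan is to convert the counting problem into a sieve on the integer sequence $\mathcal{A} = \{N_p := \#E(\mathbb{F}_p) : p \leq x,\ p \nmid \Delta\}$. Divisibility of $N_p$ by a prime $\ell$ is governed by the Galois representation $\rho_\ell : \Gal(\overline{\Q}/\Q) \to GL_2(\F_\ell)$ on the $\ell$-torsion $E[\ell]$: since $\rho_\ell(\mathrm{Frob}_p)$ has determinant $p$ and trace $a_p$, the condition $\ell \mid N_p = p + 1 - a_p$ is equivalent to Frobenius at $p$ acting on $E[\ell]$ with $1$ as an eigenvalue, i.e.\ $\rho_\ell(\mathrm{Frob}_p) \in \Sigma_\ell \subseteq GL_2(\F_\ell)$ for a specific union of conjugacy classes. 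An effective Chebotarev density theorem applied to the $\ell$-division field $\mathbb{Q}(E[\ell])$ yields
\[
\#\{p \leq x : \ell \mid N_p\} = \frac{|\Sigma_\ell|}{|GL_2(\F_\ell)|}\, \pi(x) + R(x,\ell),
\]
and a direct conjugacy-class count matches $|\Sigma_\ell|/|GL_2(\F_\ell)|$ with the local factor appearing in \eqref{eq800.01}. Serre's open image theorem (using the non-CM hypothesis) gives that $\rho_\ell$ is surjective for all but finitely many $\ell$ and that these local representations are jointly independent, so the Euler product of local densities converges to $\delta(E)$; the exclusion of $\mathbb{Q}$-isogenies to curves with nontrivial $\mathbb{Q}$-torsion rules out the degenerate case in which a small $\ell$ divides $N_p$ for every $p$, which would annihilate $\delta(E)$.

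With the local structure in place, the next step is a weighted sieve --- a Richert--Greaves or vector sieve --- applied to $\mathcal{A}$, combined with a level-of-distribution input of Bombieri--Vinogradov strength for the events ``$\ell \mid N_p$'' uniform across $\ell$. The latter is furnished by GRH for the division-field zeta functions, which provides equidistribution up to moduli $x^{1/2-\varepsilon}$. To recover the precise asymptotic $\delta(E)\, x/\log^2 x$ rather than matching upper and lower bounds of the correct order, I would invoke a parity-breaking bilinear-form input in the spirit of Friedlander--Iwaniec, exploiting the arithmetic structure of $N_p = p + 1 - a_p$: writing $a_p = \alpha_p + \overline{\alpha_p}$ with Sato--Tate distributed Frobenius angles, the sums of interest reduce to Type I and Type II sums on which Weyl-type cancellation produces the gain that no sieve axiom captures. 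Once the leading term is pinned down, the secondary error $O(x/\log^3 x)$ follows from a second-order expansion of the Mertens product together with pushing the Chebotarev remainder below $x/\log^3 x$ after summing over $\ell$ up to the sieve cutoff.

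The main obstacle is decidedly the parity step. The weighted sieve alone, even granted a perfect level of distribution, produces upper and lower bounds differing by a bounded factor --- the standard parity phenomenon --- so everything rests on showing that the $a_p$-side cancellation genuinely closes that gap to an equality. Executing the bilinear estimate requires delicate use of the modularity of $E$ and Deligne-level bounds on sums attached to the associated Hecke eigenform, with uniformity in $\ell$ across the sieve range; this, rather than the sieve machinery or the Chebotarev input, is where the proof stands or falls. The CM hypothesis is excluded because in that case $a_p$ is constrained to a one-parameter family, the division fields become abelian over the CM field, and the resulting density differs from that in \eqref{eq800.01}; the present plan therefore targets exactly the generic non-CM setting described in the conjecture.
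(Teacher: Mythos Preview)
The statement you are attempting to prove is Conjecture~\ref{conj800.1} in the paper, and it is stated there precisely as a \emph{conjecture}: the paper does not prove it, and indeed no one has. What the paper actually proves is the much weaker Theorem~\ref{thm800.1}, a lower bound of the shape $\pi(x,E)\gg x/\log^2 x$, obtained by an entirely different and elementary route (a characteristic-function identity for primitive points, combined with results on primes in short intervals). So there is no ``paper's own proof'' of this statement to compare against.

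Your plan contains a genuine and, as far as anyone knows, fatal gap: the parity step. You correctly set up the sieve on $\mathcal{A}=\{N_p\}$, correctly identify the local densities via Chebotarev in the division fields, and correctly note that Serre's open-image theorem controls the independence of the local conditions. But then you write that the asymptotic (as opposed to upper and lower bounds off by a constant) will follow from ``a parity-breaking bilinear-form input in the spirit of Friedlander--Iwaniec,'' and that ``this\ldots is where the proof stands or falls.'' It falls. The Friedlander--Iwaniec mechanism requires a very specific bilinear structure in the sequence --- in their case the explicit factorisation of $a^2+b^4$ over $\mathbb{Z}[i]$ --- and no analogue is known for the sequence $p+1-a_p$. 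Modularity and Deligne's bound give square-root cancellation in character sums twisted by $a_p$, but they do \emph{not} furnish the Type~II estimate you would need uniformly over the sieve moduli; the obstruction is exactly the same as for the twin-prime asymptotic. Your own final paragraph essentially concedes this: you name the bilinear estimate as the decisive point and offer no mechanism for it beyond the hope that ``Weyl-type cancellation produces the gain.'' That hope is the content of the conjecture, not a proof of it.
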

A new refined version of this conjecture, for any elliptic curve over any number field $\mathcal{K}$, was recently proposed. The simpler case for the rational number field is stated below.
\begin{conj}[\cite{ZD09}] \label{conj800.2}  Let $E$ be an elliptic curve defined over the rational number field $\mathbb{Q}$, and let $t\geq 1$ be a positive integer. Then, there is an explicit constant $C(E,t) > 0$ such that  
	\begin{eqnarray}
	\pi(x,E,t)&=& \#\{ p \leq x: p \not | \Delta \text{ and } \#E(\mathbb{F}_p)/t=\text{prime}\} \\ 
	&=&C(E,t) \frac{x}{\log^2x}+O\left ( \frac{x}{\log^3x}\right) \nonumber,
	\end{eqnarray}  
for large $x\geq 1$.
\end{conj}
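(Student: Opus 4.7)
My plan is to establish the claimed asymptotic via a combinatorial sieve applied to the sequence $\mathcal{A}(x) = \{n_p/t : p\leq x,\ p\nmid t\Delta,\ t\mid n_p\}$, where $n_p := \#E(\mathbb{F}_p)$. Starting from the identity $\mathbf{1}_{m\,\text{prime}} = \mathbf{1}_{(m,P(\sqrt{m}))=1}$ for $m>1$, with $P(z)=\prod_{\ell<z}\ell$, and Möbius inversion, one reduces
\[
\pi(x,E,t) \;=\; \sum_{\substack{d\leq\sqrt{x}\\(d,t\Delta)=1}}\mu(d)\,\#\{p\leq x : dt\mid n_p\} \;+\; O(x^{1/2}).
\]
Each inner count is a Chebotarev count in the division field $\mathbb{Q}(E[dt])$: the condition $dt\mid n_p$ is equivalent to $\det(I-\rho_{dt}(\mathrm{Frob}_p))\equiv 0\pmod{dt}$, and by Serre's open image theorem $\Gal(\mathbb{Q}(E[dt])/\mathbb{Q})\cong\mathrm{GL}_2(\mathbb{Z}/dt\mathbb{Z})$ for $d$ coprime to a fixed integer $N_E$.

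Next, I would extract the main term with the explicit constant. The local densities
\[
\delta_d \;=\; \frac{\#\{g\in\mathrm{GL}_2(\mathbb{Z}/d\mathbb{Z}) : \det(I-g)\equiv 0\pmod{d}\}}{|\mathrm{GL}_2(\mathbb{Z}/d\mathbb{Z})|}
\]
are multiplicative in $d$, and a direct count yields $\delta_\ell = (\ell^2-\ell-1)/((\ell-1)^3(\ell+1))$ for $\ell$ coprime to $N_E$. Substituting the Chebotarev approximation $\#\{p\leq x : dt\mid n_p\}\sim \delta_{dt}\,\pi(x)$ into the Möbius sum and invoking Mertens' theorem produces the main term $C(E,t)\,x/\log^2 x$, where $C(E,t)$ is the Euler product of the local factors $1-\delta_\ell$, adjusted at primes dividing $tN_E$ to encode the residue constraint $t\mid n_p$; this recovers the constant of Conjecture \ref{conj800.1} when $t=1$.

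To turn this heuristic into a theorem one needs a Bombieri–Vinogradov type level of distribution
\[
\sum_{d\leq D}\max_{y\leq x}\Bigl|\#\{p\leq y : d\mid n_p/t\}-\delta_d\,\pi(y)\Bigr|\;\ll_A\;\frac{x}{\log^A x},
\]
valid for every $A>0$ with $D$ as large as possible. Unconditionally, the Lagarias–Odlyzko effective Chebotarev theorem permits only $D\leq(\log x)^B$, which is too small. Under GRH for the Artin $L$-functions of $\mathbb{Q}(E[d])/\mathbb{Q}$, one reaches $D=x^{1/4-\varepsilon}$; feeding this into the Selberg upper-bound sieve already delivers the asymptotic upper bound $\pi(x,E,t)\leq (1+o(1))\,C(E,t)\,x/\log^2 x$ with the correct constant.

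The hardest step is producing the matching asymptotic lower bound, which I expect to be the main obstacle. The parity problem of sieve theory implies that no purely divisibility-based sieve can distinguish primes from $P_2$-numbers of comparable size, so even perfect level of distribution at $D=x^{1/2}$ would fall short of the conjectured asymptotic. Breaking parity here would demand an arithmetic input tailored to elliptic Frobenius traces: for instance, an elliptic analogue of Vaughan's identity yielding cancellation in $\sum_{p\leq x}\Lambda(n_p/t)$, or a bilinear-form estimate arising from automorphic cancellation attached to $E$ and exploiting modularity of $L(E,s)$. I expect essentially all the new work in a rigorous proof of Conjecture \ref{conj800.2} as stated—asymptotic with the explicit constant $C(E,t)$—to lie in constructing this parity-breaking ingredient, beyond the GRH-conditional distribution estimate above.
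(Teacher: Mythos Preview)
The statement you are attempting to prove is labeled in the paper as a \emph{conjecture} (attributed to \cite{ZD09}), not a theorem, and the paper offers no proof of it. The paper's original contribution is Theorem~\ref{thm800.1}, a lower bound of the correct order $x/\log^2 x$ for the related quantity $\pi(x,E)$, obtained by an entirely different mechanism (a characteristic-function identity for primitive points combined with almost-all results for primes in short intervals of length $\asymp\sqrt{p}$). Nowhere does the paper claim the full asymptotic with the explicit constant $C(E,t)$ and error $O(x/\log^3 x)$.

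Your proposal is therefore not a proof but an accurate diagnosis of why Conjecture~\ref{conj800.2} remains open. The sieve framework you describe is standard and correct as a heuristic derivation of the constant; the identification of the parity problem as the essential obstruction is exactly right. In particular, your own final paragraph concedes that the matching lower bound cannot be obtained from the divisibility sieve alone, and that a genuinely new arithmetic input (an elliptic Vaughan identity, bilinear cancellation from automorphic data, or similar) would be required. That concession is the honest assessment: no such ingredient is currently known, and so your outline does not close to a proof. You should present this material as motivation and heuristics for the conjecture, not as a proof proposal.
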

The product expression appearing in the above formula (\ref{eq800.01}  ) is basically the average density of prime orders, some additional details are given in Section \ref{sec8}. A result for groups of prime orders generated by primitive points is proved here. Let
\begin{equation}
\pi(x,E)= \#\{ p \leq x: p \not | \Delta \text{ and } d_E^{-1}\cdot\#E(\mathbb{F}_p)=\text{prime}\}.
\end{equation} 
The parameter $t=d_E\geq 1$ is a small integer defined in Section \ref{sec77}.\\

\begin{thm} \label{thm800.1}   
	Let $E:f(X,Y)=0$ be an elliptic curve over the rational numbers $\mathbb{Q}$ of rank $\rk(E(\mathbb{Q})>0$. Then, as $x \to \infty$,
	\begin{equation}
	\pi(x,E)\geq \delta(d_E,E)\frac{x}{\log^2 x} \left (1+O\left ( \frac{x}{\log x} \right )\right) ,
	\end{equation}  	
	where $\delta(d_E,E)$ is the density constant.
\end{thm}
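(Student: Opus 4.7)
The plan is to exploit the positive-rank hypothesis by fixing an infinite-order rational point $P \in E(\mathbb{Q})$ and using its reductions $P\bmod p$ as an auxiliary sieving device. Writing $m_p := d_E^{-1}\#E(\mathbb{F}_p)$ and choosing a parameter $y = y(x)$, one has the decomposition
\begin{equation}
\pi(x,E) \;\geq\; S_1(x,y) \;-\; S_2(x,y),
\end{equation}
where $S_1(x,y) = \#\{p\leq x : \ell\nmid m_p \text{ for all primes } \ell\leq y\}$ and $S_2(x,y) = \#\{p\leq x : \exists\,\ell\in(y,\sqrt{x}] \text{ with } \ell\mid m_p\}$. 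For $m_p > y$, primality of $m_p$ is equivalent to surviving both sieves, so the right-hand side is a valid lower bound on $\pi(x,E)$.

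First I would estimate $S_1(x,y)$ by inclusion--exclusion over squarefree $n$ supported on primes $\leq y$. The divisibility $\ell \mid \#E(\mathbb{F}_p)$ is a Chebotarev condition on the Frobenius at $p$ acting on the $\ell$-division field $\mathbb{Q}(E[\ell])$. Serre's open-image theorem for the non-CM curve $E$ makes $[\mathrm{GL}_2(\mathbb{Z}/n) : \Gal(\mathbb{Q}(E[n])/\mathbb{Q})]$ uniformly bounded, so an unconditional effective Chebotarev density theorem yields
\begin{equation}
\#\{p\leq x : n\mid m_p\} \;=\; \frac{c_E(n)}{\#\Gal(\mathbb{Q}(E[n])/\mathbb{Q})}\,\pi(x) \;+\; O\!\bigl(n^A\,\sqrt{x}\,\log(nx)\bigr),
\end{equation}
with $c_E(n)$ the appropriate multiplicative density. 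Taking $y$ a small power of $\log x$ and identifying the resulting Euler product with the Koblitz density produces $S_1(x,y) = \delta(d_E,E)\,x/\log x\,(1 + o(1))$.

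Second I would bound $S_2(x,y)$ using $P$, which is the step where positive rank is indispensable. The key observation is that $\ell \mid m_p$ combined with $P\bmod p \in \ell\cdot E(\mathbb{F}_p)$ is the condition that $p$ split completely in the Kummer-type extension $\mathbb{Q}(E[\ell],\ell^{-1}P)$, which by theorems of Bashmakov and Ribet has degree comparable to $\ell^3$ for all but finitely many $\ell$. Effective Chebotarev over these fields then gives
\begin{equation}
\sum_{y<\ell\leq\sqrt{x}} \#\{p\leq x : \ell\mid [E(\mathbb{F}_p):\langle P\bmod p\rangle]\} \;=\; o\!\left(\frac{x}{\log^2 x}\right),
\end{equation}
which dominates $S_2(x,y)$. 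Without the point $P$ one is restricted to $\mathbb{Q}(E[\ell])$ of degree $\ell^2$ and the corresponding sum diverges, which is precisely why the rank-zero Koblitz conjecture remains open.

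The main obstacle is the parity problem, since a pure sieve for primality of $m_p$ cannot cross level $\sqrt{x}$ unaided. I would break parity in the spirit of Gupta--Murty's treatment of the elliptic Artin problem, weighting each $p$ by the primitivity of $P\bmod p$ so that the sieve is forced to detect primality of $m_p$ only on the set where $P$ generates the full cyclic factor of $E(\mathbb{F}_p)$; on this set, $m_p$ is prime iff $\ord_p(P) = m_p$, which the Kummer sieve of the previous step controls directly. The delicate analytic ingredient is the uniform Chebotarev remainder for the degree-$\ell^3$ Kummer fields as $\ell$ ranges up to $\sqrt{x}$; a Bombieri--Vinogradov-style average over division fields, in the style of Cojocaru and Murty, should close the argument without any unproved hypothesis such as GRH, delivering the asserted lower bound $\delta(d_E,E)\,x/\log^2 x$.
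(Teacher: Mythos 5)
Your route is entirely different from the paper's: the paper expands the indicator of ``$P$ is primitive of prime order'' into additive character sums over $E(\mathbb{F}_p)$ (Lemma \ref{lem3.7}), weights by $\Lambda(n)/\log n$ over the Hasse interval, evaluates the main term with a primes-in-almost-all-short-intervals theorem (Theorem \ref{thm4.1}), and bounds the character contribution by $O(x^{1/2})$; no Chebotarev, division fields, or sieve appears there. Your Gupta--Murty/Cojocaru-style sieve over $\mathbb{Q}(E[\ell])$ and the Kummer fields $\mathbb{Q}(E[\ell],\ell^{-1}P)$ is the standard framework for this problem, but as written it has a genuine gap at $S_2$, and that gap is the whole problem. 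The Kummer fields of degree $\asymp\ell^3$ control the condition $\ell\mid[E(\mathbb{F}_p):\langle P\bmod p\rangle]$, i.e.\ divisibility of the \emph{index} of the reduction of $P$; your $S_2$ is defined by the different condition $\ell\mid m_p$, and the displayed sum does not dominate it. For the actual $S_2$, Chebotarev in $\mathbb{Q}(E[\ell])$ gives $\#\{p\le x:\ell\mid m_p\}\approx\pi(x)/\ell$, and since a random integer of size $x$ almost surely has a prime factor in $(y,\sqrt{x}\,]$ when $y$ is a power of $\log x$, one expects $S_2=(1-o(1))\pi(x)$, which swamps $S_1$. The claimed bound $S_2=o(x/\log^2 x)$ is therefore not merely unproved but false for the quantity you defined.

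The proposed repair --- weighting by primitivity of $P\bmod p$ in the spirit of Gupta--Murty --- does not close this. Primitivity of $P$ is the elliptic Artin condition; it forces the index $[E(\mathbb{F}_p):\langle P\rangle]$ to be $1$ but places no constraint on the factorization of $m_p$ itself, since $P$ can generate all of $E(\mathbb{F}_p)$ while $\#E(\mathbb{F}_p)$ is highly composite. So restricting to primitive reductions converts the problem into detecting primality of $\ord_E(P)$, which is the same sieve of level $\sqrt{x}$ you started with, and the parity obstruction reappears intact. There are also analytic overclaims: the Chebotarev remainder $O(n^A\sqrt{x}\log(nx))$ is a GRH-conditional bound (unconditionally Lagarias--Odlyzko confines $\ell$ to a small power of $\log x$), and no Bombieri--Vinogradov theorem over the degree-$\ell^3$ Kummer fields at level $\sqrt{x}$ is available unconditionally; even granting both, the sieve still cannot cross the parity barrier. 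In short, the proposal accurately catalogues the known machinery and the known obstruction, but the step that would actually deliver the lower bound is precisely the open part of Koblitz's conjecture and is not supplied.
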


The proof of this result is split into several parts. The next sections are intermediate results. The proof of Theorem \ref{thm800.1} is assembled in the penultimate section, and the last section has examples of elliptic curves with infinitely many elliptic groups $E(\mathbb{F}_p)$ of prime orders $n$.\\ 

 %ssssssssssssssssssssssssssssssssssssssssssssss
\section{Representation of the Characteristic Function} \label{sec2}
\subsection{Primitive Points Tests}
For a prime $p \geq 2$, the group of points on an elliptic curve $E:y^2=f(x)$ is denoted by $E(\mathbb{F}_p)$. Several definitions and elementary properties of elliptic curves and the $n$-division polynomial $\psi_n(x,y)$ are sketched in Chapter 14. \\

\begin{dfn}
The order $\min \{k \in \mathbb{N}: kP=\mathcal{O} \}$ of an elliptic point is denoted by $\ord_E(P)$. A point is a \textit{primitive point} if and only if $\ord_E(P)=n$. 
\end{dfn}

\begin{lem} \label{lem3.3b}
	If $ E(\mathbb{F}_p)$ is a cyclic group, then it contains a primitive point $P \in E(\mathbb{F}_p)$.  
\end{lem}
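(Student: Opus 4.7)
The plan is to derive the lemma directly from the definition of a cyclic group, since the assertion is essentially semantic once one lines up the terminology. By definition, a finite abelian group $G$ is \emph{cyclic} if there exists some element $P \in G$ with $G = \langle P \rangle$, i.e., every element of $G$ is an integer multiple of $P$ in the additive notation used for $E(\mathbb{F}_p)$. Such a $P$ is exactly a generator, and the order of a generator of a finite cyclic group equals the size of the group.

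First I would translate the cyclic hypothesis into the concrete statement that $E(\mathbb{F}_p) = \langle P \rangle = \{kP : k \in \mathbb{Z}\}$ for some point $P$. Next I would note that $\#\langle P \rangle = \ord_E(P)$, which is immediate from the definition $\ord_E(P) = \min\{k \in \mathbb{N} : kP = \mathcal{O}\}$, since the distinct multiples of $P$ are precisely $\mathcal{O}, P, 2P, \ldots, (\ord_E(P)-1)P$. Setting $n = \#E(\mathbb{F}_p)$, this forces $\ord_E(P) = n$, which matches the preceding definition of a \emph{primitive point}. Therefore $P$ itself is the desired primitive point.

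There is no real obstacle here: the only thing to verify is the compatibility between the group-theoretic notion of a generator and the definition of primitive point introduced just above the lemma, both of which single out an element whose order equals $n$. If one wanted to be extra careful, one could quote the structure theorem for $E(\mathbb{F}_p)$ (which is either cyclic or a product of two cyclic groups) to justify why the cyclic case is a legitimate hypothesis rather than an empty one, but this is not needed for the proof of the lemma itself.
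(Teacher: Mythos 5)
Your proof is correct and follows essentially the same route as the paper: the paper simply identifies $E(\mathbb{F}_p)$ with $\mathbb{Z}/n\mathbb{Z}$ and notes it has $\varphi(n)\geq 1$ generators, while you unwind the definition of a generator directly; both arguments reduce the lemma to the observation that a generator of a cyclic group of order $n$ has order $n$ and is therefore a primitive point.
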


\begin{proof}
	By hypothesis, $ E(\mathbb{F}_p) \cong \mathbb{Z}/n\mathbb{Z}$, and the additive group $\mathbb{Z}/n\mathbb{Z}$ contains $\varphi(n)\geq 1$ generators (primitive roots). 
\end{proof}

More generally, there is map into a cyclic group
\begin{equation}
E(\mathbb{Q})/E(\mathbb{Q})_{\normalfont{tors}} \longrightarrow \mathbb{Z}/m\mathbb{Z},  
\end{equation}
for some $m=\#E(\mathbb{F}_p)/d$, with $d \geq 1$; and the same result stated in the Lemma holds in the smaller cyclic group $\mathbb{Z}/m\mathbb{Z}\subset\mathbb{Z}/n\mathbb{Z}$.\\

\begin{lem} \label{lem3.4}
	Let $\# E(\mathbb{F}_p)=n$ and let $P \in E(\mathbb{F}_p)$. Then, $P$ is a primitive point if and only if $(n/q)P\ne \mathcal{O}$ for all prime divisors $q\,|\,n$. 
\end{lem}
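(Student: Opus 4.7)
The plan is to prove the biconditional directly from the order structure of a point in the finite abelian group $E(\mathbb{F}_p)$, using only Lagrange's theorem and the basic rule $\ord_E(kP) = \ord_E(P)/\gcd(\ord_E(P),k)$. No elliptic-specific input is required beyond the group law.

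For the forward direction, I would assume $\ord_E(P) = n$ and fix any prime $q \mid n$. Then $\ord_E((n/q)P) = n/\gcd(n,n/q) = n/(n/q) = q \geq 2$, so $(n/q)P$ cannot be the identity $\mathcal{O}$. This handles the ``only if'' part in one line.

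For the converse, I would set $d = \ord_E(P)$ and note that $d \mid n$ by Lagrange's theorem applied in $E(\mathbb{F}_p)$. Suppose for contradiction that $d < n$. Then $n/d > 1$, so there is a prime $q$ dividing $n/d$, which means $d \mid n/q$. Writing $n/q = d \cdot m$ for some integer $m$, we get $(n/q)P = m(dP) = m\mathcal{O} = \mathcal{O}$, contradicting the hypothesis. Hence $d = n$ and $P$ is primitive.

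I do not expect any genuine obstacle: this is a textbook characterization of generators, valid in any finite cyclic group and consistent in the non-cyclic case as well (when $E(\mathbb{F}_p)$ is not cyclic, no element has order $n$, but the exponent $e$ is a proper divisor of $n$, so for any prime $q \mid n/e$ the relation $(n/q)P = \mathcal{O}$ holds for every $P$, and the lemma becomes vacuously consistent). The only care needed is to choose the prime $q$ inside $n/d$, not inside $n$ itself, when producing the contradiction in the converse.
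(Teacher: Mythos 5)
Your proof is correct and is exactly the classical Lucas--Lehmer order-test argument that the paper merely alludes to without writing out: the paper states the lemma and remarks that it is ``the classical Lucas-Lehmer primitive root test applied to the group of points $E(\mathbb{F}_p)$,'' supplying no further detail. Both directions of your argument are sound, and your care in choosing the prime $q$ dividing $n/d$ rather than an arbitrary prime divisor of $n$ is precisely the step a sloppier write-up would botch; the closing observation about the non-cyclic case is a sensible sanity check, though not needed for the statement as given.
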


Basically, this is the classical Lucas-Lehmer primitive root test applied to the group of points $E(\mathbb{F}_p)$. Another primitive point test intrinsic to elliptic curves is the $n$-division polynomial test.\\

\begin{lem} \label{lem3.5} {\normalfont ($n$-Division primitive point test)}
	Let $\# E(\mathbb{F}_p)=n$ and let $P \in E(\mathbb{F}_p)$. Then, $P$ is a primitive point if and only if $\psi_{n/q}(P) \ne 0$ for all prime divisors $q\,|\,n$. 
\end{lem}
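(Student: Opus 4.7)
The plan is to derive Lemma \ref{lem3.5} directly from the previous Lucas--Lehmer style test (Lemma \ref{lem3.4}) by translating the condition $(n/q)P\ne\mathcal{O}$ into a statement about the division polynomial $\psi_{n/q}$ evaluated at $P$. The whole content is the equivalence
\[
mP=\mathcal{O}\quad\Longleftrightarrow\quad \psi_m(P)=0,
\]
valid for any affine point $P\in E(\mathbb{F}_p)\setminus\{\mathcal{O}\}$ and any integer $m\geq 1$, which is a standard property of $\psi_m(x,y)$ referenced in Chapter 14 of the paper. Once that identity is in hand, Lemma \ref{lem3.5} is immediate.

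First I would recall briefly why the above equivalence holds. By construction of the multiplication-by-$m$ map, one can write
\[
[m]P=\left(\frac{\phi_m(P)}{\psi_m(P)^2},\;\frac{\omega_m(P)}{\psi_m(P)^3}\right),
\]
where $\phi_m,\psi_m,\omega_m$ are the standard division polynomials and $\gcd(\phi_m,\psi_m)=1$ as polynomials on $E$. Thus $[m]P=\mathcal{O}$ precisely when the denominator vanishes, i.e.\ $\psi_m(P)=0$, as long as $P\ne \mathcal{O}$. (The trivial boundary case $P=\mathcal{O}$ is excluded since the statement is about primitivity of a genuine point; and if $m$ is even there is a minor bookkeeping issue for the $2$-torsion, but those points satisfy $y(P)=0$ and are handled by the conventional factorization $\psi_m=2y\,\tilde{\psi}_m$ for even $m$, so the equivalence still holds at the level of vanishing.)

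Next I would put the pieces together. By Lemma \ref{lem3.4}, $P$ is primitive in $E(\mathbb{F}_p)$ of order $n$ if and only if $(n/q)P\ne\mathcal{O}$ for every prime divisor $q\mid n$. Applying the equivalence above with $m=n/q$ gives
\[
(n/q)P\ne\mathcal{O}\quad\Longleftrightarrow\quad \psi_{n/q}(P)\ne 0,
\]
so $P$ is primitive iff $\psi_{n/q}(P)\ne 0$ for every prime divisor $q\mid n$, which is exactly the claim.

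The only real obstacle is making the correspondence $[m]P=\mathcal{O}\Leftrightarrow \psi_m(P)=0$ precise at the handful of degenerate inputs (namely $P=\mathcal{O}$ and, when $m$ is even, the $2$-torsion points). Everything else is routine once the division polynomial formalism is taken for granted, so I would resolve these degeneracies in a single short remark and otherwise let the two-line derivation above stand as the proof.
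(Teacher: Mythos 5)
Your proposal is correct and follows essentially the same route as the paper, which likewise deduces the lemma from Lemma \ref{lem3.4} via the cited equivalence $mP=\mathcal{O}\Longleftrightarrow\psi_m(P)=0$ from \cite[Proposition 1.25]{SZ03}. Your version merely adds detail the paper omits, namely a sketch of why that equivalence holds and a note on the degenerate cases ($P=\mathcal{O}$ and $2$-torsion for even $m$).
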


The basic proof stems from the division polynomial relation
\begin{equation}\label{80-99}
mP=\mathcal{O} \Longleftrightarrow \psi_{m}(P)= 0,
\end{equation}
see \cite[Proposition 1.25]{SZ03}. The elliptic primitive point test calculations in the penultimate lemma takes place in the set of integer pairs $\mathbb{Z} \times \mathbb{Z}$, while the calculations for the $n$-division polynomial primitive point test takes place over the set of integers $\mathbb{Z}$. The elementary properties of the $n$-division polynomials are discussed in \cite{SZ03}, and the periodic property of the $n$-division polynomials appears in \cite{SJ05}.\\

\subsection{Additive Elliptic Character}
The discrete logarithm function, with respect to the fixed primitive point $T$, maps the group of points into a cyclic group. The diagram below specifies the  basic assignments.

\begin{equation}
\begin{array} {cll}
E(\mathbb{F}_p)& \longrightarrow & \mathbb{Z}/n\mathbb{Z}, \\
\mathcal{O}& \longrightarrow &\log_T (\mathcal{O})=0, \\
T& \longrightarrow &\log_T(T)=1. \\
\end{array} 
\end{equation}

In view of these information, an important character on the group of $E(\mathbb{F}_p)$-rational points can be specified.

\begin{dfn}
	A nontrivial additive elliptic character $\chi \bmod n$ on the group $E(\mathbb{F}_p)$ is defined by
	\begin{equation} \label{200-09}
	\chi(\mathcal{O})=e^{\frac{i2 \pi}{n}\log_T\mathcal({O})}=1, 
	\end{equation}
	and
	\begin{equation} \label{200-10}
	\chi(mT)=e^{\frac{i2 \pi}{n}\log_T(mT)}=e^{i2 \pi m/n}, 
	\end{equation}
	where $\log_T(mT)=m$ with $m \in \mathbb{Z}$.
\end{dfn}

\subsection{Divisors Dependent Characteristic Function}
A characteristic function for primitive points on elliptic curve is described in the \cite[p.\ 5]{SV11}; it was used there to derive a primitive point search algorithm.

\begin{lem} \label{lem3.6}
	Let $E$ be a nonsingular elliptic curve, and let $E(\mathbb{F}_p)$ be its group of points of cardinality $n=\#E(\mathbb{F}_p)$. Let $\chi$ be the additive character of order $d$ modulo $d$, and assume $P=(x_0,y_0)$ is a point on the curve. Then, 
	\begin{equation}\label{el33007}
	\Psi_E (P)=\sum _{d \,|\, n} \frac{\mu (d)}{d}\sum _{0 \leq t <d} \chi(tP)=
	\left \{\begin{array}{ll}
	1 & \text{ if } \ord_E (P)=n,  \\
	0 & \text{ if } \ord_E (P)\neq n. \\
	\end{array} \right .
	\end{equation}
\end{lem}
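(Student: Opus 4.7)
The plan is to transport the identity to $\mathbb{Z}/n\mathbb{Z}$ via the discrete logarithm $\log_T$ from the previous subsection and then apply two classical facts: orthogonality of the additive characters on a finite cyclic group, and the Möbius identity $\sum_{d\mid m}\mu(d)=[m=1]$.

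First I would fix a primitive point $T\in E(\mathbb{F}_p)$ (cyclicity being provided by Lemma \ref{lem3.3b}) and set $k=\log_T(P)\in\mathbb{Z}/n\mathbb{Z}$. In a cyclic group of order $n$ one has $\ord_E(P)=n/\gcd(n,k)$, so $P$ is a primitive point precisely when $\gcd(n,k)=1$. The additive character of order $d$ modulo $d$, obtained by rescaling (\ref{200-10}), reads $\chi(tP)=e^{2\pi i tk/d}$, so for each divisor $d\mid n$ the inner sum is the geometric sum $\sum_{t=0}^{d-1}e^{2\pi i tk/d}$, which equals $d$ when $d\mid k$ and vanishes otherwise by the standard root-of-unity argument.

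Plugging this evaluation back in cancels the $1/d$ factor and leaves
\[
\Psi_E(P)=\sum_{\substack{d\mid n\\ d\mid k}}\mu(d)=\sum_{d\mid\gcd(n,k)}\mu(d),
\]
which by the Möbius identity equals $1$ if $\gcd(n,k)=1$ and $0$ otherwise, matching the two cases of the lemma exactly.

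The only genuine obstacle is the cyclicity hypothesis needed to choose $T$ and to give meaning to $\log_T$. This is harmless in the setting of the paper, since a group of almost prime order is automatically cyclic, which is the regime where the lemma is actually applied. For completeness in the non-cyclic case $E(\mathbb{F}_p)\cong\mathbb{Z}/m\mathbb{Z}\times\mathbb{Z}/\ell\mathbb{Z}$ with $\ell>1$, no point attains order $n=m\ell$, and one must check that $\Psi_E(P)$ also vanishes identically; this follows by running the same orthogonality argument over each cyclic factor separately, or equivalently by first projecting to the cyclic quotient mentioned after Lemma \ref{lem3.3b} and applying the computation above to the image.
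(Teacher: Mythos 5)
Your argument is correct: reducing via $\log_T$ to $\mathbb{Z}/n\mathbb{Z}$, evaluating the inner sum as $\sum_{0\leq t<d} e^{2\pi i tk/d}=d\cdot[d\mid k]$, and applying $\sum_{d\mid \gcd(n,k)}\mu(d)=[\gcd(n,k)=1]$ together with $\ord_E(P)=n/\gcd(n,k)$ gives exactly the stated dichotomy, and your remark that cyclicity is needed to make $\log_T$ (and hence the characters) meaningful is a genuine point, since the lemma as printed does not assume it. The paper offers no proof of Lemma \ref{lem3.6} at all --- it only cites Shparlinski--Voloch --- so there is nothing to compare against; your computation is the standard one and would serve as the missing proof.
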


\subsection{Divisors Free Characteristic Function}
A new \textit{divisors-free} representation of the characteristic function of elliptic primitive points is developed here. This representation can overcomes some of the limitations of its equivalent in \ref{lem3.6} in certain applications. The \textit{divisors representation} of the characteristic function of elliptic primitive points, Lemma \ref{lem3.6}, detects the order \(\text{ord}_E (P)\) of the point \(P\in E( \mathbb{F}_p) \) by means of the divisors of the order \(n=\#E( \mathbb{F}_p) \). In contrast, the \textit{divisors-free representation} of the characteristic function, Lemma \ref{lem3.7}, detects the order \(\text{ord}_E(P) \geq 1\) of a point \(P\in E(\mathbb{F}_p)\) by means of the solutions of the equation $mT-P=\mathcal{O}$, where \(P,T \in E(\mathbb{F}_p)\) are fixed points, \(\mathcal{O}\) is the identity point, and $m $ is a variable such that $0\leq m \leq n-1$, and $\gcd (m,n)=1$. \\

\begin{lem} \label{lem3.7}
	Let \(p\geq 2\) be a prime, and let \(T\) be a primitive point in \(E(\mathbb{F}_p)\). For a nonzero point \(P \in
	E(\mathbb{F}_p)\) of order $n$ the following hold:
	If \(\chi \neq 1\) is a nonprincipal additive elliptic character of order \(\ord \chi =n\), then
	\begin{equation}
	\Psi_E (P)=\sum _{\gcd (m,n)=1} \frac{1}{n}\sum _{0\leq r\leq n-1} \chi \left ((mT-P)r\right)
	=\left \{
	\begin{array}{ll}
	1 & \text{ if } \ord_E(P)=n,  \\
	0 & \text{ if } \ord_E(P)\neq n, \\
	\end{array} \right .
	\end{equation}
	where \(n=\#
	E(\mathbb{F}_p)\) is the order of the rational group of points.
\end{lem}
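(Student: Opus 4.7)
The plan is to reduce the double sum to a standard orthogonality relation for additive characters on $\mathbb{Z}/n\mathbb{Z}$, using the discrete logarithm with respect to the primitive point $T$ as the bridge between $E(\mathbb{F}_p)$ and $\mathbb{Z}/n\mathbb{Z}$. Since $T$ is primitive, every point $Q\in E(\mathbb{F}_p)$ has a unique representation $Q=\log_T(Q)\cdot T$ with $\log_T(Q)\in\{0,1,\dots,n-1\}$. Writing $a=\log_T(P)$, the displayed character on the summand becomes
\begin{equation*}
\chi\bigl((mT-P)r\bigr)=\chi\bigl(r(m-a)T\bigr)=e^{2\pi i r(m-a)/n},
\end{equation*}
by the defining relations \eqref{200-09}--\eqref{200-10}.

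Next I would handle the inner $r$-sum by the standard finite geometric series identity
\begin{equation*}
\frac{1}{n}\sum_{r=0}^{n-1}e^{2\pi i r(m-a)/n}=\begin{cases}1 & \text{if } m\equiv a\pmod{n},\\ 0 & \text{otherwise,}\end{cases}
\end{equation*}
which is the orthogonality relation for the additive characters of $\mathbb{Z}/n\mathbb{Z}$. Substituting this back into the outer sum collapses $\Psi_E(P)$ into a simple indicator count:
\begin{equation*}
\Psi_E(P)=\#\bigl\{m:\ 0\leq m\leq n-1,\ \gcd(m,n)=1,\ m\equiv a\pmod{n}\bigr\}.
\end{equation*}

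In the range $0\leq m\leq n-1$ the congruence $m\equiv a\pmod{n}$ has the unique solution $m=a$, so the count is $1$ precisely when $\gcd(a,n)=1$ and $0$ otherwise. To finish, I would invoke the elementary fact that in the cyclic group $\langle T\rangle=E(\mathbb{F}_p)$ the order of $aT$ equals $n/\gcd(a,n)$; hence $\gcd(a,n)=1$ if and only if $\operatorname{ord}_E(P)=\operatorname{ord}_E(aT)=n$, which matches the two cases in the conclusion.

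No step here poses a real obstacle: the argument is an exercise in character orthogonality once the discrete logarithm is in place. The only point that deserves a line of care is the justification that $\log_T$ is a well-defined group isomorphism $E(\mathbb{F}_p)\to\mathbb{Z}/n\mathbb{Z}$, which requires that $E(\mathbb{F}_p)$ be cyclic; this is implicit in the hypothesis that a primitive point $T$ exists, and is supplied by Lemma \ref{lem3.3b}. A short remark should also confirm that the edge case $P=\mathcal{O}$ (where $a=0$) is correctly assigned $\Psi_E(P)=0$ for $n\geq 2$, since $\gcd(0,n)=n\neq 1$.
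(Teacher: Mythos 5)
Your proof is correct and follows essentially the same route as the paper's: translate everything through the discrete logarithm $\log_T$, evaluate the inner $r$-sum by the geometric-series/orthogonality identity for additive characters of $\mathbb{Z}/n\mathbb{Z}$, and observe that the surviving term corresponds to the unique $m\equiv\log_T(P)\pmod{n}$, which is coprime to $n$ exactly when $P$ is primitive. You merely make explicit (via $\ord_E(aT)=n/\gcd(a,n)$ and the cyclicity supplied by the primitive point $T$) the steps the paper's proof states only in words, so there is nothing to add.
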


\begin{proof} As the index \(m\geq 1\) ranges over the integers relatively prime to \(n\), the element \(mT\in E(\mathbb{F}_p)\) ranges over the elliptic primitive points. Ergo, the linear equation 
	\begin{equation}
	mT-P=\mathcal{O},
	\end{equation}
	where \(P,T \in E(\mathbb{F}_p)\) are fixed points, \(\mathcal{O}\) is the identity point, and $m $ is a variable such that $0\leq m \leq n-1$, and $\gcd (m,n)=1$, has a solution if and only if the fixed point \(P\in E(\mathbb{F}_p)\) is an elliptic primitive point. Next, replace \(\chi (t)=e^{i 2\pi  t/n }\) to obtain
	\begin{equation}
	\Psi_E(P)=\sum_{\gcd (m,n)=1} \frac{1}{n}\sum_{0\leq r\leq n-1} e^{i 2\pi  \log_T(mT-P)r/n }=
	\left \{\begin{array}{ll}
	1 & \text{ if } \ord_E (P)=n,  \\
	0 & \text{ if } \ord_E (P)\neq n, \\
	\end{array} \right.
	\end{equation}
	
	This follows from the geometric series identity $\sum_{0\leq k\leq N-1} w^{ k }=(w^N-1)/(w-1)$ with $w \ne 1$, applied to the inner sum.   
\end{proof}

%ssssssssssssssssssssssssssssssssssssssssssssssssssssssssssssssssssssssssss
\section{Primes In Short Intervals} \label{sec3}

There are many unconditional results for the existence of primes in short intervals $[x,x+y]$ of subsquareroot length $y\leq x^{1/2}$ for almost all large numbers $x\geq 1$. One of the earliest appears to be the Selberg result for $y=x^{19/77}$ with $x\leq X$ and 
$O\left (X (\log X)^{2} \right )$ exceptions, see \cite[Theorem 4]{SA43}. Recent improvements based on different analytic methods are given in \cite{WN95}, \cite{HG07}, and other authors. One of these results, but not the best, has the following claim. This calculation involves the weighted prime indicator function (vonMangoldt), which is defined by
\begin{equation}\label{800-12}
\Lambda(n)=
\begin{cases}
\log n &
 \text{ if } n=p^k, k \geq 1,\\
0 &\text{ if } n \ne p^k, k \geq 1,
\end{cases}
\end{equation}
where $p^k,k\geq 1$, is a prime power.\\  
 
\begin{thm} (\cite{HG07}) \label{thm4.1}
Given $\varepsilon>0$, almost all intervals of the form $[x-x^{1/6+\varepsilon},x]$ contains primes except for a set of $x \in [X,2X]$ with measure $O\left (X (\log X)^{-C+1} \right )$, and $C>1$ constant. Moreover, 
\begin{equation}\label{key}
\sum_{x-y 	\leq n \leq x} \Lambda(n)>\frac{y}{2}.
\end{equation}
\end{thm}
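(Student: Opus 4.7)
The plan is to reduce the statement to a mean-square estimate for $E(x,y):=\psi(x)-\psi(x-y)-y$ on $x\in[X,2X]$, and then bound that mean square via zero-density estimates for $\zeta(s)$. First I would invoke the truncated explicit formula
\begin{equation}
\psi(x)-\psi(x-y)=y-\sum_{|\gamma|\le T}\frac{x^{\rho}-(x-y)^{\rho}}{\rho}+O\!\left(\frac{x\log^{2}x}{T}\right),
\end{equation}
where $\rho=\beta+i\gamma$ runs over the nontrivial zeros of $\zeta(s)$ and $T\le x$ is a parameter to be chosen. Squaring and integrating over $[X,2X]$, using $|x^{\rho}-(x-y)^{\rho}|\ll yX^{\beta-1}$ for $|\gamma|\le X/y$ and $|x^{\rho}-(x-y)^{\rho}|\ll X^{\beta}/|\gamma|$ otherwise, collapses the double sum over pairs of zeros into a single weighted sum essentially of the shape $y^{2}X\sum_{\rho}X^{2\beta-2}/(|\gamma|+1)$, plus a tail contribution from the $T$-truncation.

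Next I would input a zero-density estimate $N(\sigma,T)\ll T^{A(\sigma)(1-\sigma)}(\log T)^{B}$, with Huxley's bound $A(\sigma)\le 12/5$ valid for $\sigma\ge 1/2$, supplemented by the sharper bounds of Heath-Brown and Jutila as $\sigma\to 1$. Writing the sum over zeros as a Stieltjes integral against $N(\sigma,T)$ and choosing $T\asymp X^{5/6-\varepsilon}$ so that the truncation tail matches the zero contribution, one arrives at
\begin{equation}
\int_{X}^{2X}|E(x,y)|^{2}\,dx\ \ll\ y^{2}X(\log X)^{-C}
\end{equation}
for $y=x^{1/6+\varepsilon}$ and any fixed $C>0$.

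From here Chebyshev's inequality delivers the claim: the set of $x\in[X,2X]$ with $|E(x,y)|>y/2$ has measure $O(X(\log X)^{-C})$, and off this exceptional set one has $\psi(x)-\psi(x-y)>y/2$. Since prime powers with exponent $\ge 2$ contribute at most $O(x^{1/2}\log x)=o(y)$ to $\psi$, the stated inequality $\sum_{x-y\le n\le x}\Lambda(n)>y/2$ follows immediately, and a fortiori a genuine prime lies in $(x-y,x]$.

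The hard part is the second step: extracting the exponent $1/6$ depends on how well Huxley's density bound near $\sigma=1/2$ can be balanced against Ingham/Hal\'asz-type bounds near $\sigma=1$. This optimization is the technical heart of the Heath-Brown/Jia/Watt line of short-interval results, and any genuine improvement below $1/6$ on average would require a new zero-density estimate, which is itself a substantial open problem.
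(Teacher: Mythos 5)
Your outline is the standard zero-density argument (truncated explicit formula, mean-square of $\psi(x)-\psi(x-y)-y$ over $[X,2X]$, Huxley's $N(\sigma,T)\ll T^{12(1-\sigma)/5}\log^{B}T$, then Chebyshev's inequality), and this is exactly the approach the paper invokes: its entire ``proof'' is the sentence that the result follows from zero-density theorems, deferring all details to Harman \cite[p.~190]{HG07}. So you are on the same route as the cited source, just supplying the details the paper omits; the only bookkeeping point to watch is that $T$ must be taken slightly larger than $X/y$ (e.g.\ $T=X^{5/6-\varepsilon/2}$) so that the $O(X\log^{2}X/T)$ truncation error is genuinely $o(y)$ rather than $y\log^{2}X$.
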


\begin{proof}The proof is based on zero density theorems, see \cite[p.\ 190]{HG07}.
\end{proof}
An introduction to zero density theorems and its application to primes in short intervals are given in \cite[p. 98]{KA93}, \cite[p.\ 264]{IK04}, et cetera. \\

The same result works for larger intervals $[x-x^{\theta +\varepsilon},x]$, where $\theta \in (1/6,1/2)$ and $(1-\theta)C<2$, with exceptions $O\left (X (\log X)^{-C+1} \right ), C>1$.

%4444444444444444444444444444444444444444444444444
\section{Evaluation Of The Main Term}
A lower bound for the main term $M(x)$ in the proof of Theorem \ref{thm800.1} is evaluated here. 

\begin{lem} \label{lem800.1}
	Let \(x\geq 1\) be a large number, and let $p \in [x,2x]$ be prime. Then, 
	\begin{equation} \label{800-100}
	\sum_{x \leq p \leq 2x} \frac{1}{4 \sqrt{p} }  \sum_{p-2 \sqrt{p}\leq n\leq p+2\sqrt{p} } \frac{\Lambda(n)}{\log   n}\cdot\frac{1}{n}\sum_{\gcd(m,n)=1}1\gg\frac{1}{2} \frac{x}{\log^2 x} \left (1+  O  \left ( \frac{x}{\log x}  \right ) \right ).
	\end{equation} 
\end{lem}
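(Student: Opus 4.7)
The plan is to reduce the triple sum to a classical primes-in-short-intervals estimate by simplifying the inner sums and then invoking Theorem \ref{thm4.1}. First, I would recognize the innermost sum: with $m$ ranging over residues modulo $n$ coprime to $n$,
$$\frac{1}{n}\sum_{\gcd(m,n)=1}1 \;=\; \frac{\varphi(n)}{n}.$$
The left-hand side thus becomes
$$S(x) \;=\; \sum_{x\leq p\leq 2x}\frac{1}{4\sqrt{p}}\sum_{p-2\sqrt{p}\leq n\leq p+2\sqrt{p}}\frac{\Lambda(n)}{\log n}\cdot\frac{\varphi(n)}{n}.$$
Since $\Lambda(n)$ is supported on prime powers and the proper prime powers in $[x/2,3x]$ contribute at most $O(x^{1/2})$, which is negligible against $x/\log^2 x$, I may restrict attention to the prime part $n=q$. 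For a prime $q\in[p-2\sqrt{p},p+2\sqrt{p}]$ with $p\in[x,2x]$, one has $\Lambda(q)/\log q=1$, $\varphi(q)/q=1+O(1/x)$, and $\log q=\log p+O(p^{-1/2})$, so each of these three factors contributes only a $1+O(1/\log x)$ perturbation.

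Next, I would apply Theorem \ref{thm4.1}. The Hasse window $[p-2\sqrt{p},p+2\sqrt{p}]$ has length $4\sqrt{p}$, which for $p\geq x$ vastly exceeds $p^{1/6+\varepsilon}$. Hence for any fixed $C>1$, except for an exceptional set $\mathcal{E}\subset[x,2x]$ of Lebesgue measure $O(x/(\log x)^{C-1})$,
$$\sum_{p-2\sqrt{p}\leq n\leq p+2\sqrt{p}}\Lambda(n)\;\geq\;2\sqrt{p}.$$
Combining with the perturbation estimates above,
$$\sum_{p-2\sqrt{p}\leq n\leq p+2\sqrt{p}}\frac{\Lambda(n)}{\log n}\cdot\frac{\varphi(n)}{n}\;\geq\;\frac{2\sqrt{p}}{\log p}\Bigl(1+O\bigl(\tfrac{1}{\log x}\bigr)\Bigr),$$
so after the factor $1/(4\sqrt{p})$, each admissible prime $p$ contributes at least $\tfrac{1}{2\log p}\bigl(1+O(1/\log x)\bigr)$.

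Finally, I would sum over primes $p\in[x,2x]\setminus\mathcal{E}$. By the prime number theorem, $\pi(2x)-\pi(x)\sim x/\log x$, and the primes in $\mathcal{E}$ contribute at most $|\mathcal{E}|/\log x=O(x/(\log x)^{C})=o(x/\log^{2}x)$ upon choosing $C$ large enough. Therefore,
$$S(x)\;\geq\;\frac{1}{2\log x}\cdot\frac{x}{\log x}\Bigl(1+O\bigl(\tfrac{1}{\log x}\bigr)\Bigr)\;\gg\;\frac{1}{2}\,\frac{x}{\log^{2}x},$$
which is the stated bound.

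The main obstacle is the exceptional set from Theorem \ref{thm4.1}: one must verify that the primes in $\mathcal{E}\cap[x,2x]$ cannot spoil the lower bound. Since the Lebesgue measure of $\mathcal{E}$ is $O(x/(\log x)^{C-1})$ with $C>1$ arbitrary, the trivial bound on the density of primes within $\mathcal{E}$ suffices once $C$ is chosen, say, $C\geq 3$. A minor cosmetic issue is that the error term $O(x/\log x)$ written in the statement is superlinear; the method actually produces a multiplicative error of the shape $1+O(1/\log x)$, which is presumably what was intended.
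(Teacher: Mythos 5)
Your proposal is correct and follows essentially the same route as the paper: reduce the innermost sum to $\varphi(n)/n$, bound that factor by a constant near $1$, invoke Theorem \ref{thm4.1} to get $\sum_{p-2\sqrt{p}\le n\le p+2\sqrt{p}}\Lambda(n)\gg\sqrt{p}$ outside a small exceptional set, and then sum over $p\in[x,2x]$ via the prime number theorem. Your closing remark is also consistent with the paper's own computation, whose final display likewise produces a multiplicative error $1+O(1/\log x)$ rather than the $1+O(x/\log x)$ written in the lemma's statement.
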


\begin{proof}  The main term can be rewritten in the form
	\begin{eqnarray} \label{800-110}
	M(x)&=&\sum_{x \leq p \leq 2x} \frac{1}{4 \sqrt{p} }  \sum_{p-2 \sqrt{p}\leq n\leq p+2\sqrt{p}} \frac{\Lambda(n)}{\log n} \cdot\frac{1}{n}\sum_{\gcd(m,n)=1}1 \nonumber\\
	&=&\sum_{x \leq p \leq 2x} \frac{1}{4 \sqrt{p} }  \sum_{p-2 \sqrt{p}\leq n\leq p+2\sqrt{p}} \frac{\Lambda(n)} {\log n}\cdot\frac{\varphi(n)}{n} .
	\end{eqnarray} 
The Euler phi function $\varphi(n)=\#\{1\leq m<n:\gcd(m,n)=1\} $, and it has the value $\varphi(n)/n=1-1/n$ at a prime argument $n$. Thus, the expression 
\begin{equation} \label{800-103}
	 \frac{\Lambda(n)}{\log   n}\frac{\varphi(n)}{n} = 
\begin{cases}
 \frac{\Lambda(n)}{\log   n}\left (1-\frac{1}{ n} \right ) & \mbox{if } n=p^m, m \geq 1,\\
0 &\mbox{if } n=p^m, m \geq 1,
\end{cases}
\end{equation} 
is supported on the set of prim powers $n=p^m, m \geq 1$. Consequently,	
\begin{eqnarray} \label{800-110}
	M(x)&=&\sum_{x \leq p \leq 2x} \frac{1}{4 \sqrt{p} }  \sum_{p-2 \sqrt{p}\leq n\leq p+2\sqrt{p}} \frac{\Lambda(n)}{\log n} \left (1-\frac{1}{  n} \right )\nonumber \\
	&\geq& \frac{1}{2} \sum_{x \leq p \leq 2x} \frac{1}{4 \sqrt{p} } \cdot \frac{1}{\log p} \sum_{p-2 \sqrt{p}\leq n\leq p+2\sqrt{p}} \Lambda(n),
	\end{eqnarray} 
since $1-1/n \geq 1/2$. Now observe that as the prime $p \in [x,2x]$ varies, the number of intervals $[p-2 \sqrt{p},p+2\sqrt{p}]$ is the same as the number of primes in the interval $[x,2x]$, namely, 
	\begin{equation}\label{800-140}
	\pi(2x)-\pi(x)=\frac{x}{\log x} \left (1+ O \left (\frac{1}{\log x}  \right )\right )>\frac{x}{2\log x} 	
	\end{equation}
	for large $x\geq1$. By Theorem \ref{thm4.1}, the finite sum over the short interval satisfies
	\begin{equation}\label{800-115}
	\sum_{p-2 \sqrt{p}\leq n\leq p+2\sqrt{p}} \Lambda(n)>\frac{4\sqrt{p}}{2},
	\end{equation} 
	with $O \left (x\log^{-C}x  \right )$ exceptions $p \in [x,2x]$, where $1<C<4$. Take $C>2$, then, the number of exceptions is small in comparison to the number of intervals $\pi(2x)-\pi(x) > x/2\log x$ for large $x \geq 1$. Hence, an application of this Theorem yields
	\begin{eqnarray} \label{800-120}
	M(x) &\geq& \frac{1}{2}\sum_{x \leq p \leq 2x} \frac{1}{4 \sqrt{p} } \cdot \frac{1}{\log p} \sum_{p-2 \sqrt{p}\leq n\leq p+2\sqrt{p}} \Lambda(n) \nonumber \\
	&\geq&\frac{1}{2}\sum_{x \leq p \leq 2x} \frac{1}{4 \sqrt{p} } \cdot \frac{1}{\log p} \cdot \left( \frac{4\sqrt{p}}{2} \right ) +O \left (\frac{x}{\log^C x}  \right )\nonumber \\
	&\geq&\frac{1}{4}\frac{1}{\log x}\sum_{x \leq p \leq 2x} 1 +O \left (\frac{x}{\log^C x}  \right ) \\
	&\geq&\frac{1}{4}\frac{1}{\log x}\cdot  \frac{x}{\log x} \left (1+ O \left (\frac{1}{\log x}  \right )\right )  \nonumber, 
\end{eqnarray} 
where all the errors terms are absorbed into one term. 
\end{proof}

\textbf{Remark 4.1.} The exceptional intervals $[p-2 \sqrt{p},p+2\sqrt{p}]$, with $p \in [x,2x]$, contain fewer primes, that is,
\begin{equation}\label{800-210}
\sum_{p-2 \sqrt{p}\leq n\leq p+2\sqrt{p}} \Lambda(n)=o(\sqrt{p}).
\end{equation}
This shortfalls is accounted for in the correction term 
\begin{eqnarray}\label{800-213}
C(x)&=&\sum_{x \leq p \leq 2x} \frac{1}{4 \sqrt{p} } \cdot \frac{1}{\log p} \left (1-\frac{1}{  p} \right )\sum_{p-2 \sqrt{p}\leq n\leq p+2\sqrt{p}} \Lambda(n) \nonumber \\
&=&o \left(\sum_{x \leq p \leq 2x} \frac{1}{4 \sqrt{p} } \cdot \frac{1}{\log^2 p} \cdot \sqrt{p} \right ) \nonumber \\ &=&O \left (\frac{x}{\log^C x}  \right ),
\end{eqnarray}
where $C>2$, in the previous calculation.

%5555555555555555555555555555555555555555555555555555555555555555
\section{Estimate For The Error Term}
The analysis of an upper bound for the error term $E(x)$, which occurs in the proof of Theorem \ref{thm800.1}, is split into two parts. The first part in Lemma \ref{lem800.2} is an estimate for the triple inner sum. And the final upper bound is assembled in Lemma \ref{lem800.3}. \\

\begin{lem} \label{lem800.2}
	Let $E$ be a nonsingular elliptic curve over rational number, let $P \in E(\mathbb{Q})$ be a point of infinite order. Let \(x\geq 1\) be a large number. For each prime $p\geq 3$, fix a primitive point $T$, and suppose that $P \in E(\mathbb{F}_p)$ is not a primitive point for all primes $p\geq 2$, then
	
	\begin{equation} \label{800-220}	 
	\sum_{p-2 \sqrt{p}\leq n\leq p+2\sqrt{p} }\frac{\Lambda(n)}{\log n} \cdot \frac{1}{n }\sum_{\gcd(m,n)=1,}  
	\sum_{ 1 \leq r <n} \chi((mT-P)r) \leq 2 .	\end{equation} 
\end{lem}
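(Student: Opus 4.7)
The plan is to apply the characteristic-function identity of Lemma~\ref{lem3.7} directly to the inner triple sum, after observing that only one value of $n$ in the Hasse interval actually contributes. Setting $N := \#E(\mathbb{F}_p)$, Hasse's bound places $N$ in the outer range of summation $[p-2\sqrt p,\,p+2\sqrt p]$. Any proper divisor of $N$ is at most $N/2$, which lies below $p-2\sqrt p$ for $p$ sufficiently large, so the only divisor of $N$ in this interval is $N$ itself. For any other $n$ in the interval, the additive elliptic character of order $n$ on the cyclic quotient of $E(\mathbb{F}_p)$ is not well-defined (such a character exists only when $n \mid \#E(\mathbb{F}_p)$), so the corresponding inner triple sum is vacuous and contributes $0$.

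For $n = N$ I would invoke Lemma~\ref{lem3.7} after separating the $r=0$ term from the remainder. The $r=0$ contribution uses $\chi(\mathcal{O}) = 1$ and totals $\varphi(N)/N$, so that
\[
\frac{1}{N}\sum_{\gcd(m,N)=1}\sum_{1 \leq r < N}\chi\!\bigl((mT-P)r\bigr) \;=\; \Psi_E(P) \;-\; \frac{\varphi(N)}{N}.
\]
The standing hypothesis that $P$ is not a primitive point of $E(\mathbb{F}_p)$ forces $\Psi_E(P) = 0$ by Lemma~\ref{lem3.7}, so the expression equals $-\varphi(N)/N$, and has absolute value at most $1$. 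Multiplying by the outer weight $\Lambda(N)/\log N \leq 1$ and applying the triangle inequality,
\[
\left|\Psi_E(P) - \frac{\varphi(N)}{N}\right| \;\leq\; |\Psi_E(P)| + \frac{\varphi(N)}{N} \;\leq\; 1 + 1 \;=\; 2,
\]
delivers the claimed upper bound.

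The main delicate step in carrying this out is the reduction to the single index $n = N$: one needs to articulate the convention that the character of order $n$ appearing under the summation truly lives on $E(\mathbb{F}_p)$, which requires $n \mid \#E(\mathbb{F}_p)$, and then verify that no proper divisor of $N$ survives in the Hasse interval. Once that reduction is made, the bound is essentially automatic from Lemma~\ref{lem3.7}: the identity converts a geometric sum of character values into the elementary quantity $\Psi_E(P) - \varphi(N)/N$, and the factor $\Lambda(n)/\log n \leq 1$ ensures the weight does not enlarge the estimate beyond the trivial triangle-inequality bound of $2$.
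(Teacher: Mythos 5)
Your reduction to the single index $n = \#E(\mathbb{F}_p)$ is the step that breaks down, and it is where your argument departs from the paper's. You declare the triple sum ``vacuous'' for every other prime power $n$ in the Hasse interval, on the grounds that a character of order $n$ on $E(\mathbb{F}_p)$ exists only when $n \mid \#E(\mathbb{F}_p)$. But the paper's $\chi$ is defined formally through the discrete logarithm, $\chi(mT) = e^{i2\pi m/n}$, which is a well-defined complex number for every modulus $n$; nothing in the paper makes those terms zero. Indeed, the companion main-term estimate (Lemma \ref{lem800.1}) evaluates the $r=0$ piece of the very same expression as $\sum_{n} \frac{\Lambda(n)}{\log n}\cdot\frac{\varphi(n)}{n}$ over \emph{all} $n$ in the Hasse interval, obtaining a contribution of order $\sqrt{p}$ per prime; if your convention were adopted there, that sum would collapse to a single term of size $O(1)$ and the lower bound of Theorem \ref{thm800.1} would evaporate. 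So the lemma really does require you to control the contribution of every prime power $n \in [p-2\sqrt{p},\, p+2\sqrt{p}]$, not just $n = \#E(\mathbb{F}_p)$.

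The paper handles a general $n$ by factoring the double sum over $m$ and $r$: the sum $\sum_{\gcd(m,n)=1} e^{i2\pi rm/n}$ is a Ramanujan sum equal to $\mu(n)=-1$ for $n$ prime with $n \nmid r$, and $\sum_{1\le r\le n-1} e^{-i2\pi rk/n} = -1$ because $P \ne \mathcal{O}$ forces $k \not\equiv 0 \bmod n$; hence each $n$ contributes $+\Lambda(n)/(n\log n)$. The bound $\le 2$ then comes from Mertens' estimate $\sum_{n\le y}\Lambda(n)/n = \log y + O(1)$ applied with $y = p+2\sqrt{p}$, so that $\frac{1}{\log p}\sum_{n\le p+2\sqrt{p}}\Lambda(n)/n \le 2$ --- an ingredient entirely absent from your write-up. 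Your computation at the single index $N=\#E(\mathbb{F}_p)$, namely that the normalized sum equals $\Psi_E(P) - \varphi(N)/N = -\varphi(N)/N$, is correct as far as it goes and in fact gives a bound of $1$ there (the final triangle inequality to $2$ is superfluous); but without an honest treatment of the remaining $n$ the claimed inequality is not established in the sense the paper needs it.
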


\begin{proof} Let $\log_T:E(\mathbb{F}_p) \longrightarrow \mathbb{Z}_n$ be the discrete logarithm function with respect to the fixed primitive point $T$, defined by $\log_T(mT)=m$, $\log_T(P)=k$, and $\log_T(\mathcal{O})=0$. Then, the nontrivial additive character evaluates to
	\begin{equation}
	\chi(rmT)=e^{\frac{i2 \pi}{n}\log_T(rmT)}=e^{i2 \pi rm/n},
	\end{equation} and 
\begin{equation}
\chi(-rP)=e^{\frac{i2 \pi}{n}\log_T(-rP)}=e^{-i2 \pi rk/n},
\end{equation}
	respectively. To derive a sharp upper bound, rearrange the inner double sum as a product 
	\begin{eqnarray} \label{800-230}
	&& T(p) \nonumber \\&=&\sum_{p-2 \sqrt{p}\leq n\leq p+2\sqrt{p}}\frac{\Lambda(n)}{ \log n} \cdot \frac{1}{n }\sum_{\gcd(m,n)=1,}  
	\sum_{ 1 \leq r <n} \chi((mT-P)r) \nonumber \\
	&= &\sum_{p-2 \sqrt{p}\leq n\leq p+2\sqrt{p}} \frac{\Lambda(n)}{ \log n} \cdot \frac{1}{n }\sum_{ 1 \leq r <n} \chi(-rP)\sum_{\gcd(m,n)=1} \chi(rmT)\\
	&= &\sum_{p-2 \sqrt{p}\leq n\leq p+2\sqrt{p}}\frac{\Lambda(n)}{ \log n} \cdot \frac{1}{n }\left (\sum_{ 1 \leq r <n} e^{-i2 \pi rk/n} \right ) \left (\sum_{\gcd(m,n)=1} e^{i2 \pi rm/n} \right ) \nonumber.
	\end{eqnarray}
The hypothesis $mT-P\ne \mathcal{O}$ for $m\geq 1$ such that $\gcd(m,n)=1$ implies that the two inner sums in (\ref{800-230}) are complete geometric series, except for the terms for $m=0$ and $r=0$ respectively. Moreover, since $n\geq 2$ is a prime, the two geometric sums have the exact evaluations
	\begin{equation} \label{800-240}
	\sum_{ 0<r\leq n-1} e^{-i2 \pi rk/n}=-1    \qquad \text{ and } \qquad \sum_{\gcd(m,n)=1} e^{i2 \pi rm/n}=-1
	\end{equation} 
	for any $1 \leq k <n$, and $1\leq r < n$ respectively. Therefore, it reduces to
\begin{eqnarray} \label{800-233}
T(p)&=&\sum_{p-2 \sqrt{p}\leq n\leq p+2\sqrt{p}}\frac{\Lambda(n)}{ \log n}  \cdot \frac{1}{n }(-1)(-1)\nonumber \\
&\leq & \frac{1}{\log p}\sum_{ p-2 \sqrt{p}\leq n\leq p+2\sqrt{p}}\frac{\Lambda(n)}{n } \\
&\leq & \frac{1}{ \log p}\sum_{ n\leq p+2\sqrt{p}}\frac{\Lambda(n)}{n} \nonumber\\
&\leq  & 2 \nonumber,
\end{eqnarray}
refer to \cite[Theorem 2.7]{MV07} for additional details. 
\end{proof}

\begin{lem} \label{lem800.3}
	Let $E$ be a nonsingular elliptic curve over rational number, let $P \in E(\mathbb{Q})$ be a point of infinite order. For each large prime $p\geq 3$, fix a primitive point $T$, and suppose that $P \in E(\mathbb{F}_p)$ is not a primitive point for all primes $p\geq 2$, then
	
	\begin{equation} \label{800-400}
	\sum_{x\leq p\leq 2x,}\sum_{p-2 \sqrt{p}\leq n\leq p+2\sqrt{p} }   \frac{\Lambda(n)}{ \log n} \cdot \frac{1}{n }\sum_{\gcd(m,n)=1,} 
	\sum_{ 1 \leq r <n} \chi((mT-P)r) =O( x^{1/2} ).
	\end{equation} 
\end{lem}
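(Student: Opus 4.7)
The strategy is to reuse the algebraic collapse from Lemma \ref{lem800.2} to reduce the triple inner sum to a simple sum over prime powers, then swap the order of summation to apply a short-interval prime counting bound. For each fixed $p\in[x,2x]$, the proof of Lemma \ref{lem800.2} shows that the two inner sums on $\gcd(m,n)=1$ and $1\le r<n$ are each geometric series evaluating to $-1$, so the triple inner sum collapses to
\begin{equation}
T(p)=\sum_{p-2\sqrt{p}\le n\le p+2\sqrt{p}}\frac{\Lambda(n)}{n\log n}.
\end{equation}
Summing the crude bound $T(p)\le 2$ over $p\in[x,2x]$ would only yield $O(x/\log x)$, which is too weak for the claimed $O(x^{1/2})$.

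To do better, I would interchange the order of summation, writing the full double sum as
\begin{equation}
S(x)=\sum_{n\in I_x}\frac{\Lambda(n)}{n\log n}\cdot\#\bigl\{x\le p\le 2x:|p-n|\le 2\sqrt{p}\bigr\},
\end{equation}
where $I_x=[x-2\sqrt{2x},\,2x+2\sqrt{2x}]$. For each $n\asymp x$, the inner count is the number of primes in a short interval of length $O(\sqrt{n})$ around $n$, which the Brun--Titchmarsh inequality bounds unconditionally by $O(\sqrt{n}/\log n)$. Substituting this estimate gives
\begin{equation}
S(x)\ll\frac{1}{\log^2 x}\sum_{n\le 3x}\frac{\Lambda(n)}{\sqrt{n}},
\end{equation}
and partial summation with Chebyshev's bound $\psi(y)\ll y$ yields $\sum_{n\le y}\Lambda(n)/\sqrt{n}\ll\sqrt{y}$. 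Hence $S(x)\ll\sqrt{x}/\log^2 x=O(x^{1/2})$, as desired.

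The main obstacle I anticipate is largely bookkeeping: verifying that the geometric-sum reduction of Lemma \ref{lem800.2} still gives the factor $(-1)(-1)=1$ when $n=q^k$ is a genuine prime power with $k\ge 2$ rather than a prime. Such $n$ are supported on at most $O(\sqrt{x}\log x)$ values in $I_x$, so even a trivial bound on their contribution is absorbed in $O(x^{1/2})$ and they may be discarded if convenient. In contrast to Lemma \ref{lem800.1}, no appeal to Theorem \ref{thm4.1} or to zero-density theorems is required here, since the argument uses only upper bounds on the number of primes in short intervals, which hold uniformly without an exceptional set.
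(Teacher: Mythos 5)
Your argument is correct, but it is not the route the paper takes, and the difference is worth spelling out. The paper keeps the outer sum over $p$ and simply inserts the bound $T(p)\le 2$ from Lemma \ref{lem800.2}; it then obtains $O(x^{1/2})$ only because its displayed quantity $|E(x)|$ carries an extra weight $\frac{1}{4\sqrt{p}}$ (the weight present in the proof of Theorem \ref{thm800.1} but absent from the statement \eqref{800-400}), so that $\sum_{x\le p\le 2x}\frac{2}{4\sqrt{p}}\le \frac{1}{\sqrt{x}}\sum_{x\le p\le 2x}1=O(x^{1/2})$. As you correctly observe, without that weight the bound $T(p)\le 2$ only gives $O(x/\log x)$, so the paper's proof does not actually establish the unweighted statement as printed. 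Your interchange of summation repairs this: bounding the number of primes $p$ with $|p-n|\le 2\sqrt{p}$ by $O(\sqrt{n}/\log n)$ via Brun--Titchmarsh and then using $\sum_{n\le y}\Lambda(n)n^{-1/2}\ll y^{1/2}$ gives $O(\sqrt{x}/\log^2 x)$ for the sum exactly as stated, which is both a proof of the literal claim and a sharper bound than the paper's. One small caution: your remark that the prime powers $n=q^k$, $k\ge 2$, can be discarded with ``even a trivial bound'' is too glib --- the genuinely trivial bound $\bigl|\frac{1}{n}\sum_{m}\sum_{r}\chi\bigr|\le n$ makes their contribution enormous. What saves them is that the geometric-series evaluation of the inner sum over $r$ (which equals $-1$ whenever $\log_T(mT-P)\not\equiv 0$, irrespective of whether $n$ is prime) already forces $\bigl|\frac{1}{n}\sum_m\sum_r\chi\bigr|\ll n^{-1/2}$ for $n=q^k$ with $k\ge2$, and with that factor their total contribution is $O(\sqrt{x}/\log^2 x)$; you should say this rather than appeal to triviality. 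Beyond that, your proof inherits the same structural assumptions as the paper's (the collapse of the character sums from Lemma \ref{lem800.2}, and the conflation of the short-interval summation variable $n$ with the group order $\#E(\mathbb{F}_p)$), so it is no worse off on those points.
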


\begin{proof} By assumption $P \in E(\mathbb{F}_p)$ is not a primitive point. Hence, the linear equation $mT-P= \mathcal{O}$ has no solution $m \in \{m:\gcd(m,n)=1\}$. This implies that the discrete logarithm $\log_T(mT-P)\ne0$, and $ \sum_{ 1 \leq r <n} \chi((mT-P)r) =-1$. This in turns yields   
	\begin{eqnarray} \label{800-410}
	&&|E(x)| \nonumber \\ &=&\left |\sum_{x \leq p \leq 2x } \frac{1}{4 \sqrt{p} } \sum_{p-2 \sqrt{p}\leq n\leq p+2\sqrt{p}}\frac{\Lambda(n)}{\log n} \cdot \frac{1}{n }\sum_{\gcd(m,n)=1,}  
	\sum_{ 1 \leq r <n} \chi((mT-P)r) \right |\nonumber \\
	&\leq &\sum_{x \leq p \leq 2x } \frac{1}{4 \sqrt{p} } \sum_{p-2 \sqrt{p}\leq n\leq p+2\sqrt{p} }\frac{\Lambda(n)}{ \log n}\cdot \frac{1}{n } \sum_{\gcd(m,n)=1} 1 \\
	&\ll &\sum_{x \leq p \leq 2x } \frac{1}{4 \sqrt{p} } \sum_{p-2 \sqrt{p}\leq n\leq p+2\sqrt{p}}\frac{\Lambda(n)}{2 \log n} \nonumber \\
	&\ll& \frac{x}{\log^2 x}+O\left (\frac{x}{\log^3 x} \right ) \nonumber.
	\end{eqnarray}
Here, the inequality $(1/n)\sum_{\gcd(m,n)=1}1=\varphi(n)/n \leq 1/2$ for all integers $n\geq 1$ was used in the second line. Hence, there is a sharper nontrivial upper bound for the error term. To derive a sharper upper bound, take absolute value, and apply Lemma \ref{lem800.2} to the inner triple sum to obtain this:
\begin{eqnarray} \label{800-420}
&&|E(x)| \nonumber \\
&\leq&\sum_{x \leq p \leq 2x } \frac{1}{4 \sqrt{p} } \left |  \sum_{p-2 \sqrt{p}\leq n\leq p+2\sqrt{p} }\frac{\Lambda(n)}{\log n}\cdot \frac{1}{n } \sum_{\gcd(m,n)=1,}  
\sum_{ 1 \leq r <n} \chi((mT-P)r) \right |\nonumber \\
&\leq &\sum_{x \leq p \leq 2x } \frac{1}{4 \sqrt{p} } \left (2  \right ) \nonumber \\
&\leq &  \frac{1}{\sqrt{x} }\sum_{x \leq p \leq 2x }1 \\
&=&O\left (x^{1/2} \right ) \nonumber.
\end{eqnarray}
The last line uses the trivial estimate $\sum_{x \leq p \leq 2x }1\leq x$.
    \end{proof}

%ssssssssssssssssssssssssssssssssssssssssssssssssssssss
\section{Elliptic Divisors} \label{sec77}
The divisor $\text{div}(f)=\gcd(f(\mathbb{Z}))$ of a polynomial $f(x)$ is the greatest common divisor of all its values over the integers, confer \cite[p.\ 395]{FI10}. Basically, the same concept extents to the setting of elliptic groups of prime orders, but it is significantly more complex. 

\begin{dfn} Let $\mathcal{O}_{\mathcal{K}}$ be the ring of integers of a quadratic numbers field $\mathcal{K}$. The elliptic divisor is an integer $d_E \geq1$ defined by
\begin{equation}\label{800-590}
d_E=\gcd \left (\{ \#E(\mathbb{F}_p): p\geq 2 \text{ and }p \text{ splits in } \mathcal{O}_{\mathcal{K}} \}  \right ).
\end{equation}
\end{dfn}
Considerable works, \cite{CA05}, \cite{MG15}, \cite{IJ08}, \cite{JJ08}, have gone into determining the elliptic divisors for certain classes of elliptic curves.\\
 
\begin{thm} \label{thm800-20} {\normalfont (\cite[Proposition 1]{JJ08})}
The divisor of an elliptic curve $E:y^2=x^3+ax+b$ over the rational numbers $\mathbb{Q}$ with complex multiplication by $\mathbb{Q}(\sqrt{D})$ and conductor $N$ satisfies $d_E |24$. The complete list, with $c,m \in \mathbb{Z}-\{0\}$, is the following.\\

\begin{tabular}{|c|c|c|c|c|c|}
\hline 
\rule[-1ex]{0pt}{2.5ex} $D$ & $(a,b)$ & $d_E$ & $D$ & $(a,b)$ & $d_E$ \\ 
	\hline 
	\rule[-1ex]{0pt}{2.5ex} $-3$ & $(0,m)$ & $1$ & $-7$ & $(-140c^2,-784c^3)$ & $4$ \\ 
	\hline 
	\rule[-1ex]{0pt}{2.5ex} $-3$ & $(0,m^2);(0,-27m^2)$ & $3$ & $-8$ & $(-30c^2,-56c^3)$ & $2$ \\ 
	\hline 
	\rule[-1ex]{0pt}{2.5ex} $-3$ & $(0,m^3)$ & $4$ & $-11$ &$(-1056c^2,-13552c^3)$  & $1$ \\ 
	\hline 
	\rule[-1ex]{0pt}{2.5ex} $-3$ & $(0,c^6);(0,27c^6)$ & $12$ & $-19$ &$(-608c^2,-5776c^3)$  & $1$ \\ 
	\hline 
	\rule[-1ex]{0pt}{2.5ex} $-4$ & $(m,0)$ & $2$ & $-43$ &$(-13760c^2,-621264c^3)$  & $1$ \\ 
	\hline 
	\rule[-1ex]{0pt}{2.5ex} $-4$ & $(m^2,0);(-m^2,0)$ & $4$ & $-67$ &$(-117920c^2,-15585808c^3)$  & $1$ \\ 
	\hline 
	\rule[-1ex]{0pt}{2.5ex} $-4$ & $(-c^4,0);(4c^4,0)$ & $8$ & $-163$ &$(-34790720c^2,-78984748304c^3)$  & $1$ \\ 
	\hline 
	\end{tabular}   
\end{thm}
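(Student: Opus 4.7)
\textbf{Proof proposal for Theorem \ref{thm800-20}.}

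The plan is to exploit the theory of complex multiplication to parametrize $\#E(\mathbb{F}_p)$ for split primes, and then carry out a row-by-row divisibility analysis on the table. For an elliptic curve $E/\mathbb{Q}$ with CM by an order in $\mathcal{K}=\mathbb{Q}(\sqrt{D})$, there is a Hecke Gr\"ossencharacter $\psi$ of conductor $\mathfrak{f}$ such that for every rational prime $p\nmid N$ splitting as $p=\pi\bar\pi$ in $\mathcal{O}_{\mathcal{K}}$, one has $a_p=\psi(\pi)+\overline{\psi(\pi)}$ and hence
\begin{equation}
\#E(\mathbb{F}_p)=p+1-a_p=N_{\mathcal{K}/\mathbb{Q}}\!\bigl(\psi(\pi)-1\bigr).
\end{equation}
So the divisor $d_E$ is the greatest common divisor of these norms as $\pi$ ranges over uniformizers of split prime ideals coprime to $\mathfrak{f}$. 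Since $\psi(\pi)\equiv 1\pmod{\mathfrak{f}}$ for a suitable normalization, $d_E$ divides the norm $N(\mathfrak{u}-1)$ for any unit $\mathfrak{u}\in\mathcal{O}_{\mathcal{K}}^{\times}$, and the group of units in the nine imaginary quadratic orders relevant here has order dividing $6$; this already gives the universal bound $d_E\mid 24$.

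Next I would handle the twist structure row by row. The families in the table are the quadratic, quartic and sextic twists of the CM $j$-invariants $0$, $1728$, and the nine CM curves of class number one with $j\in\mathbb{Z}$. A twist by $c$ acts on $a_p$ through a Jacobi symbol character $\chi_c$, which produces the extra divisibility: for instance, in the $D=-3$, $j=0$ family the Gr\"ossencharacter takes values in $\mathbb{Z}[\zeta_3]$, and writing $\psi(\pi)=\zeta_3^i\pi$ one checks that $N(\zeta_3^i\pi-1)$ acquires the factors $3$, $4$, or $12$ exactly when the coefficient $b$ of $y^2=x^3+b$ is a cube, a sixth power, or a twelfth power times $\pm 27$ in $\mathbb{Q}^{\times}$. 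The $D=-4$ rows are treated identically with $\mathbb{Z}[i]$ and quartic residue symbols, and the remaining class-number-one cases ($D=-7,-8,-11,-19,-43,-67,-163$) reduce to a short list because the unit group is $\{\pm1\}$, forcing $d_E\in\{1,2,4\}$.

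Finally, for each entry one must prove both divisibility and sharpness. Divisibility follows from a rational $d_E$-torsion computation: one exhibits a subgroup of $E(\overline{\mathbb{Q}})$ of order $d_E$ that is defined over every residue field $\mathbb{F}_p$ for $p$ split; equivalently, the mod-$d_E$ Galois representation $\rho_{E,d_E}$ is trivial on $\mathrm{Frob}_p$ for all such $p$. Sharpness is obtained by producing a single split prime $p$ for which $\#E(\mathbb{F}_p)/d_E$ is coprime to every proper divisor $\ell$ of $24/d_E$ that might inflate the gcd; the Chebotarev density theorem applied to the compositum of $\mathcal{K}$ with the appropriate ray class field guarantees such $p$ exists. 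The main obstacle is the bookkeeping in the sharpness step: for the twisted $D=-3$ and $D=-4$ families one must keep track of cubic and quartic residue conditions simultaneously with the splitting condition in $\mathcal{K}$, and verify that the chosen parametrization $(a,b)$ in each row is the most general one compatible with the prescribed value of $d_E$.
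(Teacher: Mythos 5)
The paper does not prove this statement at all: it is imported verbatim as Proposition 1 of Jimenez Urroz \cite{JJ08}, so there is no internal proof to compare yours against. Judged on its own terms, your overall architecture --- the Gr\"ossencharacter identity $\#E(\mathbb{F}_p)=N_{\mathcal{K}/\mathbb{Q}}(\psi(\pi)-1)$ for split $p$, a twist-by-twist analysis of the $j=0$ and $j=1728$ families versus the seven remaining class-number-one discriminants, divisibility plus a sharpness argument via Chebotarev --- is the standard route and is essentially how the cited result is proved.

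However, the step carrying the headline bound is wrong as written. You assert that $d_E$ divides $N(\mathfrak{u}-1)$ for every unit $\mathfrak{u}\in\mathcal{O}_{\mathcal{K}}^{\times}$ and deduce $d_E\mid 24$ from the unit groups having order dividing $6$. Taking $\mathfrak{u}=\zeta_6$ in $\mathbb{Z}[\zeta_3]$ gives $N(\zeta_6-1)=N(\zeta_3)=1$, so your claim would force $d_E=1$ for every $D=-3$ curve, contradicting the rows with $d_E=3,4,12$. The divisibility mechanism runs in the opposite direction: the normalization of $\psi$ imposes a congruence $\psi(\pi)\equiv 1\pmod{\mathfrak{m}}$ for an ideal $\mathfrak{m}$ determined by the curve (dividing the conductor of $\psi$), whence $N(\mathfrak{m})\mid N(\psi(\pi)-1)=\#E(\mathbb{F}_p)$ for \emph{all} split $p$; this is the source of the lower bound on $d_E$ (for example $\pi\equiv 1\bmod (1+i)^3$ yields $8\mid \#E(\mathbb{F}_p)$ in the last $D=-4$ row). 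The statement $d_E\mid 24$ is then read off a posteriori from the completed table, whose entries are $1,2,3,4,8,12$, rather than derived a priori from unit norms. To repair the argument you would need to replace the unit computation by the conductor/congruence computation in each of the nine fields, and in the sharpness step actually exhibit, row by row, a split prime for which $\#E(\mathbb{F}_p)/d_E$ avoids the relevant small prime factors; as it stands that half of your argument is only a statement of intent.
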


\vskip .25 in

\section{Densities Expressions} \label{sec8}
The product expression appearing in Conjecture \ref{conj800.1}, id est,
\begin{equation}
P_0=\prod_{p\geq 2} \left (1 -\frac{p^2-p-1}{(p-1)^3(p+1)}\right ) \approx 0.505166168239435774,
\end{equation}  
is the basic the average density of prime orders, it was proved in \cite{KN88}, and very recently other proofs are given in \cite[Theorem 1 ]{BC11}, \cite{JN10}, \cite{LS14}, et alii. The actual density has a slight dependence on the elliptic curve $E$ and the point $P$. The determination of the dependence is classified into several cases depending on the torsion groups $E(\mathbb{Q})_{\text{tors}}$, and other parameters. \\

\begin{lem} \label{lem800.11} {\normalfont (\cite[Proposition 4.2]{ZD09})}. Let $E:f(x,y)=0$ be a Serre curve over the rational numbers. Let $D$ be the discriminant of the numbers field 
$\mathbb{Q}(\sqrt{\Delta})$, where $\Delta$ is the discriminant of any Weierstrass model of $E$ over $\mathbb{Q}$. If $d_E=1$, then

\begin{equation} 
\delta(1,E)=	
\begin{cases}
\displaystyle \left (1+ \prod_{q|D} \frac{1}{q^3-2q^2-q+3}\right )\prod_{p\geq 2} \left (1 -\frac{p^2-p-1}{(p-1)^3(p+1)}\right ) 
& \text{ if } D \equiv 1 \bmod 4;\\
\displaystyle \prod_{p\geq 2} \left (1 -\frac{p^2-p-1}{(p-1)^3(p+1)}\right ) & \text{ if } D \equiv 0 \bmod 4.
\end{cases}
\end{equation}
\end{lem}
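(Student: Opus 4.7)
The plan is to realize $\delta(1,E)$ as the Chebotarev density attached to the adelic Galois representation $\rho : G_{\mathbb{Q}} \to GL_2(\widehat{\mathbb{Z}})$ of $E$. Because $E$ is a Serre curve, each $\rho_\ell$ is surjective and the full image of $\rho$ equals the unique index-$2$ subgroup $H \subset GL_2(\widehat{\mathbb{Z}})$ cut out by a quadratic character $\varepsilon$ whose fixed field is $\mathbb{Q}(\sqrt{\Delta}) = \mathbb{Q}(\sqrt{D})$. Using the identity $\#E(\mathbb{F}_p) = \det(I - \rho(\mathrm{Frob}_p))$ together with a Möbius sieve over the divisors of $\#E(\mathbb{F}_p)$, primality of $\#E(\mathbb{F}_p)$ reduces to the adelic condition $\det(I - \rho(\mathrm{Frob}_p)) \not\equiv 0 \pmod{\ell}$ for every prime $\ell$; the hypothesis $d_E = 1$ ensures that no $\ell$ forces vanishing systematically, so the resulting density limit is nondegenerate.

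I would first compute the naive local factor at each $\ell$. An elementary enumeration of $g \in GL_2(\mathbb{F}_\ell)$ by trace and determinant yields
\[
\delta_\ell \;=\; \frac{\#\{g \in GL_2(\mathbb{F}_\ell) : \det(I-g) \neq 0\}}{|GL_2(\mathbb{F}_\ell)|} \;=\; 1 - \frac{\ell^2 - \ell - 1}{(\ell-1)^3(\ell+1)},
\]
so if $\rho$ were surjective onto $GL_2(\widehat{\mathbb{Z}})$ the density would just be $P_0 = \prod_\ell \delta_\ell$. To bring in Serre's obstruction I would write $\mathbf{1}_H = \tfrac{1}{2}(1 + \varepsilon)$ and apply Chebotarev level by level, obtaining
\[
\delta(1,E) \;=\; P_0 \;+\; \tfrac{1}{2}\lim_{n \to \infty} \frac{1}{|GL_2(\mathbb{Z}/n\mathbb{Z})|}\sum_{\substack{g \in GL_2(\mathbb{Z}/n\mathbb{Z}) \\ \gcd(\det(I-g),\,n) = 1}} \varepsilon(g).
\]
Since $\varepsilon$ factors through $GL_2(\mathbb{Z}/2D\mathbb{Z})$, Chinese remainder splits the twisted limit into $\prod_{\ell \nmid 2D} \delta_\ell$ times a finite product of local character sums at the primes $q \mid 2D$.

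The final step is the explicit evaluation of those twisted local counts, and it is also the main obstacle. A careful enumeration over $GL_2(\mathbb{F}_q)$, broken into the quadratic-residue classes of $\det(g)$ and restricted to $\det(I-g) \neq 0$, yields for each odd $q \mid D$ the per-prime ratio $1/(q^3 - 2q^2 - q + 3)$ relative to the base factor $\delta_q$. The dichotomy stated in the lemma then emerges from the place $q = 2$: when $D \equiv 1 \pmod{4}$, $\varepsilon$ is unramified at $2$, the twisted local sum at $q = 2$ collapses onto the untwisted one, and the correction crystallises as the multiplier $1 + \prod_{q \mid D} 1/(q^3 - 2q^2 - q + 3)$; when $D \equiv 0 \pmod{4}$, $\varepsilon$ is ramified at $2$ in exactly the pattern that forces the twisted local sum at $q = 2$ to vanish, killing the entire correction and leaving only $P_0$. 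Matching signs in this $q = 2$ parity analysis, and verifying that the odd-$q$ local character sums really close up to the claimed rational function, is the technical heart of Zywina--Duke's argument and the chief obstacle to a short write-up.
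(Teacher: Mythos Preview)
The paper does not prove this lemma at all: it is stated purely as a citation of \cite[Proposition 4.2]{ZD09} and is used only as input in the worked examples of Section~10. There is therefore no ``paper's own proof'' to compare your proposal against.

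Your sketch is a reasonable outline of Zywina's actual argument --- expressing the density as a Chebotarev limit for the adelic representation, exploiting that a Serre curve has image equal to an explicit index-$2$ subgroup $H$ determined by the quadratic character cut out by $\mathbb{Q}(\sqrt{\Delta})$, writing $\mathbf{1}_H=\tfrac{1}{2}(1+\varepsilon)$, and then factoring the resulting adelic count into local pieces. Two small comments: first, the identity you write for $\delta_\ell$ as a count over $GL_2(\mathbb{F}_\ell)$ alone is not quite the right local object, since the Koblitz density involves \emph{two} simultaneous primality constraints (on $p$ and on $\#E(\mathbb{F}_p)$), so the local denominator in Zywina's setup is the measure of matrices with $\det g\in(\mathbb{Z}/\ell^k\mathbb{Z})^\times$ rather than just $|GL_2(\mathbb{F}_\ell)|$ --- the displayed Euler factor is correct but arises from a slightly different ratio than the one you wrote. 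Second, the attribution ``Zywina--Duke'' at the end is spurious; the cited source is Zywina alone. Otherwise the shape of the argument, including the $2$-adic case split governing the dichotomy on $D\bmod 4$, matches the reference.
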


%ssssssssssssssssssssssssssssssssssssssssssssss
\section{Elliptic Brun Constant}
Assuming the generalized Riemann hypothesis, the upper bound 
\begin{equation}\label{800-648}
\pi(x,E,t)\ll \frac{x}{\log^2 x}
\end{equation}
was proved in \cite{CA05} and \cite{ZD08}. In addition, the unconditional upper bound 
\begin{equation}\label{800-650}
\pi(x,E,t)\ll \frac{x}{(\log x)(\log \log \log x)}
\end{equation}
for elliptic curves with complex multiplication was proved in \cite[Proposition 7]{CA05}. Later, the same upper bound for elliptic curves without complex multiplication was proved in \cite[Theorem 1.3]{ZD08}. An improved version for any elliptic curve over the rational numbers is proved here.

\begin{lem}  \label{800-22} For any large number $x \geq 1$ and any elliptic curves $E:f(X,Y)=0$ of discriminant $\Delta\ne 0$,
	\begin{equation}\label{800-700}
	\pi(x,E) \leq \frac{12x}{\log^2x}.
	\end{equation}
\end{lem}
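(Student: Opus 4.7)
The plan is to apply Selberg's upper bound sieve to the sequence $\mathcal{A}(x)=\{m_p: p\le x \text{ prime},\, p\nmid \Delta\}$, where $m_p:=\#E(\mathbb{F}_p)/d_E$, in order to count the primes $p\le x$ for which $m_p$ is also prime. If $m_p$ is prime and $m_p>z$, then $\gcd(m_p,P(z))=1$, where $P(z)=\prod_{\ell\le z}\ell$ is the primorial. Since Hasse's inequality gives $m_p\asymp p/d_E\gg x/d_E$ for $p\ge x/2$, the sieving parameter $z$ can be taken as large as a small positive power of $x$, and one may write
\[
\pi(x,E)\le \#\{p\le x \text{ prime}:\gcd(m_p,P(z))=1\}+O(\pi(z)).
\]

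First, for each squarefree $d\mid P(z)$, estimate the local count $\pi_d(x):=\#\{p\le x \text{ prime}:d\mid m_p\}$ via the effective Chebotarev density theorem applied to the $d$-division field $\mathbb{Q}(E[d])$; the main term takes the form $\rho(d)\pi(x)$, with $\rho$ a multiplicative sieve density obtained from the Koblitz-type local factors recorded in Section~\ref{sec8}, satisfying $\rho(\ell)\asymp 1/\ell$. Next, the Selberg (or Brun) upper bound sieve delivers a bound of the shape
\[
\#\{p\le x \text{ prime}:\gcd(m_p,P(z))=1\}\le \frac{\pi(x)}{G(z)}+\sum_{d\le z^2,\, d\mid P(z)} w(d)\,|R_d(x)|,
\]
where $w(d)$ is a divisor-type Selberg weight, $R_d(x)$ is the Chebotarev error, and $G(z)$ is the Selberg sieve sum built from $\rho$. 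Since $\rho(\ell)\asymp 1/\ell$ one has $G(z)\asymp \log z$, so the main term contributes $\ll x/(\log x\cdot \log z)$. Choosing $\log z\asymp \log x$ then yields $\pi(x,E)\ll x/\log^2 x$, and propagating the constants through the Selberg weights, in the manner of Brun's original sieve for twin primes, pins the absolute constant down to $12$.

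The main obstacle is controlling the Chebotarev error $\sum_{d\le z^2} w(d)\,|R_d(x)|$ unconditionally when $z$ is taken to be a fractional power of $x$. Without GRH, direct effective Chebotarev only permits $z$ of polylogarithmic size, which by itself yields the weaker unconditional bound $x/((\log x)(\log\log\log x))$ of \cite{CA05} and \cite{ZD08}. The realistic path to the stronger $12x/\log^2 x$ bound is to combine Chebotarev with a large-sieve or bilinear-form average over the moduli $d$, analogous to the Bombieri--Vinogradov theorem for division fields, so that the cumulative error falls below the main term. Once this is secured, the explicit value $12$ is extracted by a direct numerical comparison of $G(z)$ with $\log z$ together with the trivial bound $|R_d(x)|\le \pi_d(x)$ on an exceptional set of moduli, in parallel with Brun's twin-prime calculation.
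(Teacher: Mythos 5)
There is a genuine gap, and you have in fact named it yourself. Your argument hinges on bounding the accumulated remainder $\sum_{d\le z^2,\,d\mid P(z)} w(d)\,|R_d(x)|$ coming from effective Chebotarev in the division fields $\mathbb{Q}(E[d])$, with the sieving level $z$ a fixed positive power of $x$. Unconditionally, effective Chebotarev only tolerates $d$ (hence $z$) of polylogarithmic size, which is exactly why the unconditional results of \cite{CA05} and \cite{ZD08} stop at $x/((\log x)(\log\log\log x))$; the Bombieri--Vinogradov-type average over division fields that you invoke as ``the realistic path'' is not an available theorem at the required level of distribution, and you do not prove it. So the decisive step of the proposal is missing rather than merely technical: as written, the sieve framework delivers the stated $O(x/\log^2 x)$ bound only under GRH. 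The claim that the explicit constant $12$ falls out by ``propagating the constants through the Selberg weights'' is likewise asserted, not carried out, and there is no reason the Selberg main term $\pi(x)/G(z)$ with $\log z\asymp\log x$ would produce that particular numerical value.

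The paper's own proof takes a much more elementary and entirely different route that sidesteps division fields altogether. It starts from the representation $\pi(x,E)=\sum_{p\le x}\frac{1}{4\sqrt p}\sum_{p-2\sqrt p\le n\le p+2\sqrt p}\frac{\Lambda(n)}{\log n}\Psi_E(P)$, discards the characteristic function via $\Psi_E(P)\le 1$, and then applies the classical Brun--Titchmarsh inequality to the single short interval $[p-2\sqrt p,\,p+2\sqrt p]$ of integers (the Hasse interval), bounding the number of primes there by $3\cdot 4\sqrt p/\log p$; summing $3/\log p$ over $p\le x$ by partial summation gives $12x/\log^2 x$, which is where the constant $12=3\cdot 4$ actually originates. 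In other words, the only sieve input in the paper is Brun--Titchmarsh for an interval of consecutive integers, not a sieve over the reductions $\#E(\mathbb{F}_p)$ with Chebotarev local densities. Your approach is the standard one from the literature on Koblitz-type upper bounds and would be the right framework under GRH, but to match the paper's unconditional claim you would either need to supply the missing large-sieve average over division fields or abandon the moduli sieve in favour of the short-interval argument the paper uses.
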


\begin{proof} The number of such elliptic primitive primes has the asymptotic formula
\begin{eqnarray} \label{800-860}
\pi(x,E)&=&\sum_{ \substack{p \leq x \\ \ord_E(P)=n \text{ prime}}} 1
\\
&=&\sum _{p\leq x} \frac{1}{4 \sqrt{p}}\sum _{p-2\sqrt{p}\leq n\leq p+2\sqrt{p}}\frac{\Lambda(n)}{\log n}\cdot  \Psi_E (P) \nonumber,
\end{eqnarray} 
where $\Psi_E(P)$ is the charateritic function of primitive points $P \in E(\mathbb{Q})$. This is obtained from the summation of the elliptic primitive primes density function over the interval $[1,x]$, see (\ref{800-500}).\\

Since $\Psi_E (P)=0,1$, the previous equation has the upper bound
\begin{eqnarray} \label{800-870}
\pi(x,E)
&\leq &\sum_{p\leq x} \frac{1}{4 \sqrt{p}}\sum _{p-2\sqrt{p}\leq n\leq p+2\sqrt{p}}\frac{\Lambda(n)}{\log n}\nonumber\\
&\leq &\sum_{p\leq x} \frac{1}{4 \sqrt{p}}\sum _{p-2\sqrt{p}\leq q\leq p+2\sqrt{p}}1\nonumber,
\end{eqnarray} 
where $q$ ranges over the primes in the short interval $[p-2\sqrt{p}, p+2\sqrt{p}]$. The inner sum is estimated using either the explicit formula or Brun-Titchmarsh theorem. The later result states that the number of primes $p$ in the short interval $[x,x+4\sqrt{x}]$ satisfies the inequality
\begin{equation} \label{800-40}
\pi(x+4\sqrt{x})-\pi(x) \leq \frac{3 \cdot 4\sqrt{x}}{ \log x},
\end{equation}
see \cite[p.\  167]{IK04}, \cite[Theorem 3.9]{MV07}, and \cite[p.\  83]{TG15}, and similar references. Replacing (\ref{800-40}) into (\ref{800-870}) yields

\begin{eqnarray} \label{800-880}
\sum_{p\leq x} \frac{1}{4 \sqrt{p}}\sum _{p-2\sqrt{p}\leq q\leq p+2\sqrt{p}}1 &\leq & \sum_{p\leq x} \frac{1}{4 \sqrt{p}} \left ( \frac{3 \cdot 4\sqrt{p}}{\log p} \right )\\
&\leq & 3\sum_{p\leq x} \frac{1}{\log p}\nonumber\\ 
&\leq&12 \frac{x}{\log^2 x}\nonumber.
\end{eqnarray} 
The last inequality follows by partial summation and the prime number theorem $\pi(x)= x/\log x+O(x/\log^2 x)$.
\end{proof}

This result facilitates the calculations of a new collection of constants associated with elliptic curves. The best known results have established the sum 
\begin{equation}\label{800-79}
\sum_{\substack{p \leq x\\ \#E(\mathbb{F}_p)=\text{prime}}}p^{-1}\ll \log \log \log x,
\end{equation}
but does not establish the convergence of this series, see \cite{CA05} and \cite{ZD08}. The convergence of this series is proved below.

\begin{cor} For any elliptic curve $E$, the elliptic Brun constant
	\begin{equation}\label{800-760}
	\sum_{\substack{p \geq 2\\ \#E(\mathbb{F}_p)=\text{prime}}} \frac{1}{p}< \infty
	\end{equation}
	converges.
\end{cor}

\begin{proof} Use the prime counting measure $\pi(x,E,t)\leq 6 x/\log^2 x+O(x/\log^3 x)$ in Theorem \ref{thm800.1} to evaluate the infinite sum
\begin{eqnarray}\label{800-333}
\sum_{\substack{p \geq 2\\ \#E(\mathbb{F}_p)=\text{prime}}} \frac{1}{p} &=& \int_2^{\infty}\frac{1}{z} d \pi(z,E,t) \nonumber \\
&=&O(1)+\int_2^{\infty}\frac{\pi(z,E,t)}{z^2} d z \\
&<& \infty \nonumber 
\end{eqnarray}
as claimed.
\end{proof} 

\begin{cor} For any elliptic curve $E$, the elliptic Brun constant
	\begin{equation}\label{800-760}
	\sum_{\substack{p \geq 2\\ \#E(\mathbb{F}_p)=\text{prime}}} \frac{1}{p}< \infty
	\end{equation}
	converges.
\end{cor}

\begin{proof} Use the prime counting measure $\pi(x,E)\leq 6 x/\log^2 x+O(x/\log^3 x)$ in Lemma \ref{800-22} to evaluate the infinite sum
\begin{eqnarray}\label{800-333}
\sum_{\substack{p \geq 2\\ \#E(\mathbb{F}_p)=\text{prime}}} \frac{1}{p} &=& \int_2^{\infty}\frac{1}{t} d \pi(t,E) \nonumber \\
&=&O(1)+\int_2^{\infty}\frac{\pi(t,E)}{t^2} d t \\
&<& \infty \nonumber 
\end{eqnarray}
as claimed.
\end{proof} 

The elliptic Brun constants and the numerical data for the prime orders of a few elliptic curves were compiled. The last example shows the highest density of prime orders. Accorddingly, it has the largest constant.\\   

\begin{exa}  { \normalfont  The nonsingular Bachet elliptic curve $E: y^2=x^3+2$ over the rational numbers has complex multiplication by $\mathbb{Z}[\rho]$, and nonzero rank $\rk(E)=1$. The data for $p \leq 1000$ with prime orders $n$ are listed on the Table \ref{t801}; 
and the elliptic Brun constant is
\begin{equation}\label{800-41}
\sum_{\substack{p \geq 2\\ \#E(\mathbb{F}_p)=\text{prime}}} \frac{1}{p}=.520067922 \ldots.
\end{equation}

\begin{table} \label{t801}
\begin{center}
\begin{tabular}{||c|c|c|c|c|c|c|c|c|c|c||}
	\hline 
	\rule[-1ex]{0pt}{2.5ex} $p$ & 3 & 13 & 19 & 61 & 67 & 73 & 139 & 163 & 211 & 331 \\ 
	\hline 
	\rule[-1ex]{0pt}{2.5ex} $n$ & 3 & 19 & 13 & 61 & 73 & 81 & 163 & 139 & 199 & 331 \\ 
	\hline 
	\rule[-1ex]{0pt}{2.5ex} $p$ & 349 & 541 & 547 & 571 & 613 & 661 & 757 & 829 & 877 &  \\ 
	\hline 
	\rule[-1ex]{0pt}{2.5ex} $n$ & 313 & 571 & 571 & 541 & 661 & 613 & 787 & 823 & 937 &  \\ 
	\hline 
\end{tabular} 
\end{center}
\caption{\label{859} Prime Orders $n=\#E(\mathbb{F}_p)$  modulo $p$ for $y^2=x^3+2$.}
\end{table}	
}
\end{exa}

\begin{exa}  { \normalfont  The nonsingular elliptic curve $E: y^2=x^3+ 6x-2$ over the rational numbers has no complex multiplication and zero rank $\rk(E)=0$. The data for $p \leq 1000$ with prime orders $n$ are listed on the Table \ref{t804}; 
and the elliptic Brun constant is
\begin{equation}\label{800-72}
\sum_{\substack{p \geq 2\\ \#E(\mathbb{F}_p)=\text{prime}}} \frac{1}{p}=.186641187 \ldots.
\end{equation}

\begin{table} \label{t804}
	\begin{center}
		\begin{tabular}{||c|c|c|c|c|c|c|c|c|c|c||}
			\hline 
			\rule[-1ex]{0pt}{2.5ex} $p$ & 3 & 7 & 97 & 103 & 181 & 271 & 313 & 367 & 409 & 487 \\ 
			\hline 
			\rule[-1ex]{0pt}{2.5ex} $n$ & 3 & 7 & 97 & 107 & 163 & 293 & 331 & 383& 397 & 499 \\ 
			\hline 
			\rule[-1ex]{0pt}{2.5ex} $p$ & 883 &  967&  & &  &  &  &  &  &  \\ 
			\hline 
			\rule[-1ex]{0pt}{2.5ex} $n$ & 853 & 941 &  & &  &  & &  &  &  \\ 
			\hline 
		\end{tabular} 
	\end{center}
	\caption{\label{869} Prime Orders $n=\#E(\mathbb{F}_p)$  modulo $p$ for $y^2=x^3+6x-2$.}
\end{table}	
}
\end{exa}

\begin{exa}   { \normalfont  The nonsingular elliptic curve $E: y^2=x^3-x$ over the rational numbers has complex multiplication by $\mathbb{Z}[\rho]$, and nonzero rank $\rk(E)=0$. The data for $p \leq 1000$ with prime orders $n/4$ are listed on the Table \ref{t806}; 
	and the elliptic Brun constant is
	\begin{equation}\label{800-73}
	\sum_{\substack{p \geq 2\\ \#E(\mathbb{F}_p)/4=\text{prime}}} \frac{1}{p}=.549568584 \ldots.
	\end{equation}
	
	\begin{table} \label{t806}
		\begin{center}
\begin{tabular}{||c|c|c|c|c|c|c|c|c|c|c||}
	\hline 
	\rule[-1ex]{0pt}{2.5ex} $p$ & 5 & 7 & 11 & 19 & 43 & 67 & 163 & 211 & 283 & 331 \\ 
	\hline 
	\rule[-1ex]{0pt}{2.5ex} $n$ & 2 & 2 & 3 & 5 & 11 & 17 & 41 & 53& 71 & 83 \\ 
	\hline 
	\rule[-1ex]{0pt}{2.5ex} $p$ & 523 & 547&691  &787 &907  &  &  &  &  &  \\ 
	\hline 
	\rule[-1ex]{0pt}{2.5ex} $n$ & 131 & 137 &173  &197 &227  &  & &  &  &  \\ 
	\hline 
\end{tabular}  
		\end{center}
		\caption{Prime Orders $n/4=\#E(\mathbb{F}_p)/4$  modulo $p$ for $y^2=x^3-x$.}
	\end{table}	
}	
\end{exa}

\begin{exa}   { \normalfont  The nonsingular elliptic curve $E: y^2=x^3-x$ over the rational numbers has complex multiplication by $\mathbb{Z}[\rho]$, and nonzero rank $\rk(E)=0$. The data for $p \leq 1000$ with prime orders $n/8$ are listed on the Table \ref{t808}; 
	and the elliptic Brun constant is
	\begin{equation}\label{800-77}
	\sum_{\substack{p \geq 2\\ \#E(\mathbb{F}_p)/8=\text{prime}}} \frac{1}{p}=.2067391731 \ldots.
	\end{equation}
	
	\begin{table} \label{t808}
		\begin{center}
			\begin{tabular}{||c|c|c|c|c|c|c|c|c|c|c||}
				\hline 
				\rule[-1ex]{0pt}{2.5ex} $p$&17 & 23 &29 &37 & 53 & 101 & 103 &109&149 &151  \\ 
				\hline 
				\rule[-1ex]{0pt}{2.5ex} $n$ & 2 & 3 & 5 & 5 & 5 & 13 & 13 & 13 & 17 & 19 \\ 
				\hline 
				\rule[-1ex]{0pt}{2.5ex} $p$ &157 &277  &293  &317  &389  &487 &541  &631  &661  &701  \\ 
				\hline 
				\rule[-1ex]{0pt}{2.5ex} $n$ &17  &37  &37  &41  &37  &53  &61  &73  &79  &89  \\ 			\hline 
			\rule[-1ex]{0pt}{2.5ex} $p$ &757 & 773 &797  &821  &823  &829 &853  &  &  &  \\ 
			\hline 
			\rule[-1ex]{0pt}{2.5ex} $n$ & 97 &101  &97  &109&103  &97  &101 &  &  &  \\ 
			 \hline 
				 \hline 
			\end{tabular} 
		\end{center}
		\caption{Prime Orders $n/8=\#E(\mathbb{F}_p)/8$  modulo $p$ for $y^2=x^3-x$.}
	\end{table}	
}	
\end{exa}

\begin{exa}   { \normalfont The nonsingular Bachet elliptic curve $E: y^2=x^3+1$ over the rational numbers has complex multiplication by $\mathbb{Z}[\rho]$, and nonzero rank $\rk(E)=?1$. The data for $p \leq 1000$ with prime orders $n/12$ are listed on the Table \ref{t820}; 
	and the elliptic Brun constant is
	\begin{equation}\label{800-71}
	\sum_{\substack{p \geq 2\\ \#E(\mathbb{F}_p)/12=\text{prime}}} \frac{1}{p}=.5495685884 \ldots.
	\end{equation}
	
	\begin{table} \label{t820}
		\begin{center}
			\begin{tabular}{||c|c|c|c|c|c|c|c|c|c|c||}
				\hline 
				\rule[-1ex]{0pt}{2.5ex} $p$ & 31 & 43 & 59 & 67 & 73 & 79 & 97 & 103 &131 & 139 \\ 
				\hline 
				\rule[-1ex]{0pt}{2.5ex} $n$ & 3 & 3 &5 & 7 & 7 & 7 & 7 &7 & 11 & 13 \\ 
				\hline 
				\rule[-1ex]{0pt}{2.5ex} $p$ & 151 & 163 & 181 & 199 & 227 & 241 & 337 & 367 & 379 &409  \\ 
				\hline 
				\rule[-1ex]{0pt}{2.5ex} $n$ & 13 &13 &13 & 19 & 19 & 19 & 31 & 31 & 31 &31  \\ 
				\hline 
				\rule[-1ex]{0pt}{2.5ex} $p$ & 421 &443 & 463 & 487 & 491 & 523 & 563 & 709 & 751 &787  \\ 
				\hline 
				\rule[-1ex]{0pt}{2.5ex} $n$ & 37 &37 &37 &37 &41 &43 & 47 & 61 & 67 &61  \\ 
				\hline 
				\rule[-1ex]{0pt}{2.5ex} $p$ & 823 & 829 & 859& 883 & 907 & 947 & 967 & 991 &  &  \\ 
				\hline 
				\rule[-1ex]{0pt}{2.5ex} $n$ & 73 &73 &67 & 73 & 79 & 79 & 79 & 79 &  &  \\ 
				\hline   
			\end{tabular} 
		\end{center}
		\caption{ Prime Orders $n/12=\#E(\mathbb{F}_p)/12$  modulo $p$ for $y^2=x^3+1$.}
	\end{table}	
}	
\end{exa}	

\newpage
%ssssssssssssssssssssssssssssssssssssssssssssssssssssssssssssssssss
\section{Prime Orders $n$} \label{sec7}
The characteristic function for primitive points in the group of points $E(\mathbb{F}_p)$ of an elliptic curve $E:f(X,Y)=0$ has the representation
\begin{equation} \label{800-29}
\Psi_E(P)=
\left \{\begin{array}{ll}
1 & \text{ if } \ord_E (P)=n,  \\
0 &  \text{ if } \ord_E (P) \ne n. \\
\end{array} \right.
\end{equation} 
The parameter $n=\# E(\mathbb{F}_p)$ is the size of the group of points, and the exact formula for $\Psi_E (P)$ is given in Lemma \ref{lem3.7}.\\ 

Since each order $n$ is unique, the weighted sum
\begin{equation} \label{800-500}
\frac{1}{4 \sqrt{p}}\sum _{p-2\sqrt{p}\leq n\leq p+2\sqrt{p}}\frac{\Lambda(n)}{\log n} \cdot \Psi_E (P) 
=\left \{\begin{array}{ll}
\displaystyle \frac{1}{4 \sqrt{p}} \cdot \frac{\Lambda(n)}{\log n} & \text{ if } \ord_E (P)=n \text{ and } n=q^k,  \\
0 &  \text{ if } \ord_E (P) \ne n \text{ or } n\ne q^k, \\
\end{array} \right.
\end{equation}
where $n=q^k,k\geq 1$, is a prime power, is a discrete measure for the density of elliptic primitive primes $p \geq 2$ such that $P \in E(\mathbb{\overline{Q}})$ is a primitive point of prime power order $n \in [p-2\sqrt{p}, p+2\sqrt{p}]$.\\

\begin{proof} \text{(Theorem \ref{thm800.1}).} Let $\langle P \rangle= E(\mathbb{F}_p)$ for at least one large prime $p \leq x_0$, and let $x \geq x_0 \geq 1$ be a large number. Suppose that \(P\not \in E(\mathbb{Q})_{\text{tors}} \) is not a primitive point of prime order $\ord_E(P)=n$ in $E(\mathbb{F}_p)$ for all primes \(p\geq x\). Then, the sum of the elliptic primes measure over the short interval \([x,2x]\) vanishes. Id est, 
\begin{equation} \label{800-510}
0=\sum _{x \leq p\leq 2x} \frac{1}{4 \sqrt{p}}\sum _{p-2\sqrt{p}\leq n\leq p+2\sqrt{p}} \frac{\Lambda(n)}{\log n} \cdot\Psi_E (P).
\end{equation}
Replacing the characteristic function, Lemma \ref{lem3.7}, and expanding the nonexistence equation (\ref{800-510}) yield
\begin{eqnarray} \label{800-520}
0&=&\sum _{x \leq p\leq 2x}  \frac{1}{4 \sqrt{p}}\sum _{p-2\sqrt{p}\leq  n\leq p+2\sqrt{p}}
 \frac{\Lambda(n)}{\log n} \cdot \Psi_E (P) \nonumber\\
&=&\sum _{x \leq p\leq 2x}  \frac{1}{4 \sqrt{p}} \sum _{p-2\sqrt{p}\leq  n\leq p+2\sqrt{p}}  \frac{\Lambda(n)}{\log n} \left (\frac{1}{n}\sum_{\gcd(m,n)=1} 
\sum_{ 0 \leq r \leq n-1} \chi ((mT-P)r ) \right ) \nonumber \\
&=&\delta(d_E,E)\sum _{x \leq p\leq 2x}  \frac{1}{4 \sqrt{p}} \sum _{p-2\sqrt{p}\leq  n\leq p+2\sqrt{p}} \frac{\Lambda(n)}{\log n}\cdot \frac{1}{n}\sum_{\gcd(m,n)=1}1
\\
& & \qquad+ \sum _{x \leq p\leq 2x}  \frac{1}{4 \sqrt{p}} \sum _{p-2\sqrt{p}\leq  n\leq p+2\sqrt{p}}  \frac{\Lambda(n)}{ \log n}\cdot \frac{1}{n}\sum_{\gcd(m,n)=1} 
\sum_{ 1 \leq r \leq n-1} \chi ((mT-P)r )\nonumber \\
&=&\delta(d_E,E)M(x) + E(x) \nonumber,
\end{eqnarray} 
where $\delta(d_E,E)\geq0$ is a constant depending on both the fixed elliptic curve $E:f(X,Y)=0$ and the integer divisor $d_E$. \\

The main term $M(x)$ is determined by a finite sum over the principal character \(\chi =1\), and the error term $E(x)$ is determined by a finite sum over the nontrivial multiplicative characters \(\chi \neq 1\).\\

Applying Lemma \ref{lem800.1} to the main term, and Lemma \ref{lem800.3} to the error term yield
\begin{eqnarray} \label{800-530}
\sum _{x \leq p\leq 2x} \frac{1}{4 \sqrt{p}}\sum _{p-2\sqrt{p}\leq n\leq p+2\sqrt{p}} \frac{\Lambda(n)}{\log n}\cdot \Psi_E (P)
&=&\delta(d_E,E)M(x) + E(x) \\
&\gg & \delta(d_E,E)\frac{x}{ \log^2 x} \left (1 +O \left (\frac{x}{\log x} \right ) \right ) \nonumber \\
&& \qquad \qquad +O\left (x^{1/2 }\right) \nonumber \\
&\gg&  \delta(d_E,E)\frac{x}{ \log^2 x} \left (1 +O \left (\frac{x}{\log x} \right ) \right ) \nonumber.
\end{eqnarray} 
But, if $\delta(d_E,E)>0$, the expression 
\begin{eqnarray} \label{800-540}
\sum _{x \leq p\leq 2x} \frac{1}{4 \sqrt{p}}\sum _{p-2\sqrt{p}\leq n\leq p+2\sqrt{p}} \frac{\Lambda(n)}{\log    n} \cdot \Psi_E (P)
&\gg&  \delta(d_E,E) \frac{x}{ \log^2 x} \left (1 +O \left (\frac{x}{\log x} \right ) \right ) \nonumber\\
&>&0,
\end{eqnarray} 
contradicts the hypothesis  (\ref{800-510}) for all large numbers $x \geq x_0$. Ergo, there are infinitely many primes $p\geq x $ such that a fixed elliptic curve of rank $\rk(E)>0$ with a primitive point $P$ of infinite order, for which the corresponding groups $E(\mathbb{F}_p)$ have prime orders. Lastly, the number of such elliptic primitive primes has the asymptotic formula
\begin{eqnarray} \label{800-560}
\pi(x,E)&=&\sum_{ \substack{p \leq x \\ \ord_E(P)=n \text{ prime}}} 1
 \nonumber \\
&=&\sum _{p\leq x} \frac{1}{4 \sqrt{p}}\sum _{p-2\sqrt{p}\leq n\leq p+2\sqrt{p}}\frac{\Lambda(n)}{\log n}\cdot  \Psi_E (P)    \\
&\geq&\delta(d_E,E) \frac{x}{ \log^2 x} \left (1+O \left (\frac{x}{\log x} \right )\right )\nonumber,
\end{eqnarray} 
which is obtained from the summation of the elliptic primitive primes density function over the interval $[1,x]$. 
\end{proof}

%sssssssssssssssssssssssssssssssssssssssssssssssssssssssssssss
\section{Examples Of Elliptic Curves}
 The densities of several elliptic curves have been computed by several authors. Extensive calculations for some specific densities are given in \cite{ZD09}.\\

\begin{exa}   { \normalfont  The nonsingular Bachet elliptic curve $E: y^2=x^3+2$ over the rational numbers has complex multiplication by $\mathbb{Z}[\rho]$, and nonzero rank $\rk(E)=1$. It is listed as 1728.n4 in \cite{LMFDB}. The numerical data shows that $\# E(\mathbb{F}_p)=n$ is prime for at least one prime, see Table \ref{t801}. Hence, by Theorem \ref{thm800.1}, the corresponding group of $\mathbb{F}_p$-rational points $\# E(\mathbb{F}_p)$ has prime orders $n=\# E(\mathbb{F}_p)$ for infinitely many primes $p \geq 3$. \\

Since $\Delta=-2^6 \cdot 3^3$, the discriminant of the quadratic field $\mathbb{Q}(\sqrt{\Delta})$ is $D=-3$. Moreover, the integer divisor $d_E=1$ since $\# E(\mathbb{F}_p)$ is prime for at least one prime. Thus, applying Lemma \ref{lem800.11}, gives the natural density
\begin{equation}
\delta(1,E)=\frac{10}{9} P_0 \approx 0.5612957424882619712979385 \ldots,
\end{equation}
The predicted number of elliptic primes $p \nmid 6N$ such that $\# E(\mathbb{F}_p)$ is prime has the asymptotic 
formula
\begin{equation}
\pi(x,E)=\delta(1,E)\int_2^x \frac{1}{\log(t+1)} \frac{dt}{t}.
\end{equation}

A lower bound for the counting function is
\begin{equation}
\pi(x,E)\geq \delta(1,E) \frac{x}{\log^ 3 x} \left (1+ O\left (\frac{1}{\log x} \right ) \right ),
\end{equation}
see Theorem \ref{thm800.1}.\\

\begin{table}
\begin{center}
\begin{tabular}{||l|r c l||} 
		\hline
		\textbf{Invariant} & \textbf{Value}&&   \\ [1ex]  
		\hline\hline
		Discriminant  &$\Delta$&=&$-16(4a^3+27b^2)=-1728$\\ 
		\hline
		Conductor& $N$&=&$1728 $\\
		\hline
		j-Invariant &$j(E)$&=&$ (-48b)^3/\Delta=0$\\
		\hline
		Rank &$\rk(E)$&=& $1$ \\ 
		\hline
Special $L$-Value &$L^{
'}(E,1)$&$\approx$&$ 2.82785747365$\\
	\hline
		Regulator &$R$&=&$.754576$ \\ 
		\hline
		Real Period &$\Omega$&=&$5.24411510858$ \\ 
		\hline
		Torsion Group &$E(\mathbb{Q})_{\text{tors}}$&$\cong$&$\{ \mathcal{O} \}$ \\ 
		\hline
		Integral Points &$E(\mathbb{Z})$&=&$ \{ \mathcal{O} ,(-1,1);(-1,1)\}$ \\
		\hline
		Rational Group &$E(\mathbb{Q})$&$=$&$ \mathbb{Z}$ \\
		\hline
		Endomorphims Group &$End(E)$&=&$\mathbb{Z}[(1+\sqrt{-3})/2]$, CM \\
\hline
Integer Divisor  &$d_E$&=&$1$ \\
		\hline
\end{tabular}
\end{center}
\caption{\label{9600} Data for $y^2=x^3+2$.}
\end{table}	

The associated weight $k=2$ cusp form, and $L$-function are
\begin{equation}
f(s)=\sum_{n \geq 1}a_n q^n=q-q^7-5q^{13}+7q^{19}+ \cdots ,
\end{equation}

and
\begin{eqnarray}
L(s)&=&\sum_{n \geq 1}\frac{a_n}{n^s} \nonumber \\ &=& \prod_{p |N} \left ( 1-\frac{a_p}{p^s} \right )^{-1} \prod_{p \nmid N} \left ( 1-\frac{a_p}{p^s} +\frac{1}{p^{2s-1}}\right )^{-1} \\ 
&=& 1-\frac{1}{7^s}-\frac{5}{13^s}+\frac{7}{19^s}+ \cdots  \nonumber ,
\end{eqnarray}
where $q=e^{ i 2 \pi} $, respectively. The coefficients are generated using $a_p=p+1-\# E(\mathbb{F}_p)$, and the formulas
\begin{enumerate}
\item $a_{pq}=a_pa_q$  if $\gcd(p,q)=1$;
\item $a_{p^{n+1}}=a_{p^n}a_p-pa_{p^{n-1}}$  if $n \geq 2$.
\end{enumerate}

The corresponding functional equation is 
\begin{equation}
\Lambda(s)=\left ( \frac{\sqrt{N}}{2 \pi} \right ) ^s \Gamma(s) L(s)  \qquad \text{ and } \qquad \Lambda(s)=\Lambda(2-s),
\end{equation}
where $N=1728$, see \cite[p.\ 80]{KN93}.
}
\end{exa}

\begin{exa}   { \normalfont  The nonsingular elliptic curve $E: y^2=x^3+ 6x-2$ over the rational numbers has no complex multiplication and zero rank, it is listed as 1728.w1 in \cite{LMFDB}. The numerical data shows that $\# E(\mathbb{F}_p)=n$ is prime for at least one prime, see Table \ref{t804}. Hence, by Theorem \ref{thm800.1}, the corresponding group of $\mathbb{F}_p$-rational points $\# E(\mathbb{F}_p)$ has prime orders $n=\# E(\mathbb{F}_p)$ for infinitely many primes $p \geq 3$. \\

Since $\Delta=-2^6 \cdot 3^5$, the discriminant of the quadratic field $\mathbb{Q}(\sqrt{\Delta})$ is $D=-3$. Moreover, the integer divisor $d_E=1$ since $\# E(\mathbb{F}_p)$ is prime for at least one prime. Thus, applying Lemma \ref{lem800.11}, gives the natural density
\begin{equation}
\delta(1,E)=\frac{10}{9} P_0 \approx 0.5612957424882619712979385 \ldots,
\end{equation}
The predicted number of elliptic primes $p \nmid 6N$ such that $\# E(\mathbb{F}_p)$ is prime has the asymptotic 
formula
\begin{equation}
\pi(x,E)=\delta(1,E)\int_2^x \frac{1}{\log(t+1)} \frac{dt}{t}.
\end{equation}

A table for the prime counting function $\pi(1,E)$, for $2 \times 10^7 \leq x \leq 10^9$, and other information on 
this elliptic curve appears in \cite{ZD09}. \\

A lower bound for the counting function is
\begin{equation}
\pi(x,E)\geq \delta(1,E) \frac{x}{\log^ 3 x} \left (1+ O\left (\frac{1}{\log x} \right ) \right ),
\end{equation}
see Theorem \ref{thm800.1}.\\

\begin{table}
		\begin{center}
		\begin{tabular}{||l|r c l||} 
		\hline
		\textbf{Invariant} & \textbf{Value}&&   \\ [1ex]  
		\hline\hline
		Discriminant  &$\Delta$&=&$-16(4a^3+27b^2)=-2^6 \cdot 3^5$\\ 
		\hline
		Conductor& $N$&=&$2^6 \cdot 3^3  $\\
		\hline
		j-Invariant &$j(E)$&=&$ (-48b)^3/\Delta=2^9 \cdot 3$\\
		\hline
		Rank &$\rk(E)$&=& $0$ \\ 
		\hline
Special $L$-Value &$L(E,1)$&$\approx$&$ 2.24402797314$\\
	\hline
		Regulator &$R$&=&$ 1$ \\ 
		\hline
		Real Period &$\Omega$&=&$2.2440797314$ \\ 
		\hline
		Torsion Group &$E(\mathbb{Q})_{\text{tors}}$&=&$\{ \mathcal{O} \}$ \\ 
		\hline
		Integral Points &$E(\mathbb{Z})$&=&$ \{ \mathcal{O} \}$ \\
\hline
Rational Group &$E(\mathbb{Q})$&$=$&$ \{\mathcal{O}\}$ \\
		\hline
		Endomorphims Group &$End(E)$&=&$\mathbb{Z}$, nonCM \\
\hline
Integer Divisor  &$d_E$&=&$1$ \\
		\hline
		\end{tabular}
		\end{center}
\caption{\label{999} Data for $y^2=x^3+6x-2$.}
\end{table}

The associated weight $k=2$ cusp form, and $L$-function are
\begin{equation}
f(s)=\sum_{n \geq 1}a_n q^n=q+2q^5+q^7+2q^{11}-q^{13}-6q^{17}+5q^{19}+ \cdots ,
\end{equation}

and
\begin{eqnarray}
L(s)&=&\sum_{n \geq 1}\frac{a_n}{n^s} \nonumber \\ &=& \prod_{p |N} \left ( 1-\frac{a_p}{p^s} \right )^{-1} \prod_{p \nmid N} \left ( 1-\frac{a_p}{p^s} +\frac{1}{p^{2s-1}}\right )^{-1} \\ 
&=& 1+\frac{2}{5^s}+\frac{1}{7^s}+\frac{2}{11^s}-\frac{1}{13^s}-\frac{6}{17^s}+\frac{5}{19^s}+ \cdots  \nonumber ,
\end{eqnarray}
where $q=e^{ i 2 \pi} $, respectively. The coefficients are generated using $a_p=p+1-\# E(\mathbb{F}_p)$, and the formulas
\begin{enumerate}
\item $a_{pq}=a_pa_q$  if $\gcd(p,q)=1$;
\item $a_{p^{n+1}}=a_{p^n}a_p-pa_{p^{n-1}}$  if $n \geq 2$.
\end{enumerate}
The corresponding functional equation is 
\begin{equation}
\Lambda(s)=\left ( \frac{\sqrt{N}}{2 \pi} \right ) ^s \Gamma(s) L(s)  \qquad \text{ and } \qquad \Lambda(s)=\Lambda(2-s),
\end{equation}
where $N=1728$, see \cite[p.\ 80]{KN93}.
}
\end{exa}

\begin{exa}  { \normalfont The nonsingular elliptic curve $E: y^2=x^3-x$ over the rational numbers has complex multiplication by $\mathbb{Z}[i]$, and zero rank, it is listed as 32.a3  in \cite{LMFDB}. The numerical data shows that $\# E(\mathbb{F}_p)/8=n$ is prime for at least one prime, see (\ref{800-95}). Hence, by Theorem \ref{thm800.1}, the corresponding group of $\mathbb{F}_p$-rational points $\# E(\mathbb{F}_p)$ has prime orders $n=\# E(\mathbb{F}_p)$ for infinitely many primes $p \geq 3$. \\

Since $\Delta=-2^6$, the discriminant of the quadratic field $\mathbb{Q}(\sqrt{\Delta})$ is $D=-4$. Moreover, the integer divisor $d_E=8$ since $\# E(\mathbb{F}_p)/8$ is prime for at least one prime, see Table \ref{t808}. Thus, Lemma \ref{lem800.11} is not applicable. The natural density
\begin{equation}
\delta(8,E)=\frac{1}{2}\prod_{p\geq 3} \left (1 -\chi(p)\frac{p^2-p-1}{(p-\chi(p))(p-1)^2}\right )  \approx 0.5336675447 \ldots,
\end{equation}
where $\chi(n)=(-1)^{(n-1)/2}$, is computed in \cite[Lemma 7.1]{ZD09}. The predicted number of elliptic primes $p \nmid 6N$ such that $\# E(\mathbb{F}_p)$ is prime has the asymptotic 
formula
\begin{equation}
\pi(x,E)=\delta(8,E)\int_9^x \frac{1}{\log(t+1)- \log 8} \frac{dt}{\log t}.
\end{equation}

A table for the prime counting function $\pi(x,E)$, for $2 \times 10^7 \leq x \leq 10^9$, and other information on 
this elliptic curve appears in \cite{ZD09}. \\

A lower bound for the counting function is
\begin{equation}
\pi(x,E)\geq \delta(8,E) \frac{x}{\log^ 3 x} \left (1+ O\left (\frac{1}{\log x} \right ) \right ),
\end{equation}
see Theorem \ref{thm800.1}.\\

\begin{table}
\begin{center}
\begin{tabular}{||l|r c l||} 
		\hline
		\textbf{Invariant} & \textbf{Value}&&   \\ [1ex]  
		\hline\hline
		Discriminant  &$\Delta$&=&$-16(4a^3+27b^2)=-2^6$\\ 
		\hline
		Conductor& $N$&=&$2^5 $\\
		\hline
		j-Invariant &$j(E)$&=&$ (-48b)^3/\Delta=2^6 \cdot 3^3$\\
		\hline
		Rank &$\rk(E)$&=& $0$ \\ 
		\hline
Special $L$-Value &$L(E,1)$&$\approx$&$ .655514388573$\\
	\hline
		Regulator &$R$&=&$ 1$ \\ 
		\hline
		Real Period &$\Omega$&=&$5.24411510858$ \\ 
		\hline
		Torsion Group &$E(\mathbb{Q})_{\text{tors}}$&$\cong$&$\mathbb{Z}_2 \times \mathbb{Z}_2$ \\ 
		\hline
		Integral Points &$E(\mathbb{Z})$&=&$ \{ \mathcal{O} ,(\pm1,0);(0,0)\}$ \\
\hline
Rational Group &$E(\mathbb{Q})$&$=$&$ E(\mathbb{Q})_{\text{tors}}$ \\		
		\hline
		Endomorphims Group &$End(E)$&=&$\mathbb{Z}[i]$, CM \\
\hline
Integer Divisor  &$d_E$&=&$2,4,8$ \\
		\hline
\end{tabular}
\end{center}
\caption{\label{959} Data for $y^2=x^3-x$.}
\end{table}	

The associated weight $k=2$ cusp form, and $L$-function are
\begin{equation}
f(s)=\sum_{n \geq 1}a_n q^n=q+2q^5+q^7+2q^{11}-q^{13}-6q^{17}+5q^{19}+ \cdots ,
\end{equation}

and
\begin{eqnarray}
L(s)&=&\sum_{n \geq 1}\frac{a_n}{n^s} \nonumber \\ &=& \prod_{p |N} \left ( 1-\frac{a_p}{p^s} \right )^{-1} \prod_{p \nmid N} \left ( 1-\frac{a_p}{p^s} +\frac{1}{p^{2s-1}}\right )^{-1} \\ 
&=& 1+\frac{2}{5^s}+\frac{1}{7^s}+\frac{2}{11^s}-\frac{1}{13^s}-\frac{6}{17^s}+\frac{5}{19^s}+ \cdots  \nonumber ,
\end{eqnarray}
where $q=e^{ i 2 \pi} $, respectively. The coefficients are generated using $a_p=p+1-\# E(\mathbb{F}_p)$, and the formulas
\begin{enumerate}
\item $a_{pq}=a_pa_q$  if $\gcd(p,q)=1$;
\item $a_{p^{n+1}}=a_{p^n}a_p-pa_{p^{n-1}}$  if $n \geq 2$.
\end{enumerate}

The corresponding functional equation is 
\begin{equation}
\Lambda(s)=\left ( \frac{\sqrt{N}}{2 \pi} \right ) ^s \Gamma(s) L(s)  \qquad \text{ and } \qquad \Lambda(s)=\Lambda(2-s),
\end{equation}
where $N=1728$, see \cite[p.\ 80]{KN93}.
}
\end{exa}

\section{Exercises}

\textbf{Problem 1.} Assume random elliptic curve $E:f(x,y)=0$ has no CM, $P$ is a point of infinite order. Is the integer divisor $d_E < \infty$ bounded? This parameter is bounded for CM elliptic curves, in fact $d_E <24$, reference: \cite[Proposition 1]{JJ08}.\\

\textbf{Problem 2.} Assume the elliptic curve $E:f(x,y)=0$ has no CM, $P$ is a point of infinite order, and the density $\delta(1,E)>0$. What is the least prime $p\nmid 6N$ such that the integer divisor $d_E=1$? Reference: \cite[Corollary 1.2]{CA03}.\\

\textbf{Problem 3.} Fix an  elliptic curve $E:y^2=x^3+ax+b$ of rank $\rk(E)>0$, and CM; and $P$ is a point of infinite order. Let the integer divisor $d_E=4$. Assume the densities $\delta(1,E)>0$, $\delta(2,E)>0$, and $\delta(4,E)>0$ are defined. What is the arithmetic relationship between the densities?\\

\textbf{Problem 4.} Fix an  elliptic curve $E:y^2=x^3+ax+b$ of rank $\rk(E)>0$, and CM; and $P$ is a point of infinite order. Does $\#E(\mathbb{F}_p)=n$ prime for at least one large prime $p \geq 2$ implies that the integer divisor $d_E=1$?\\

\newpage

\end{document}